\newtheorem{Theorem}{Theorem}
\newtheorem{Proposition}{Proposition}
\newenvironment{Remark}{\rem\rm}{\endrem}
\newtheorem{Lemma}{Lemma}
\newtheorem{Definition}{Definition}
\newtheorem{assumption}{\bf Asumption}
\newcommand{\bd}{\begin{displaymath}}
\newcommand{\ed}{\end{displaymath}}
\newcommand{\be}{\begin{equation}}
\newcommand{\ee}{\end{equation}}
\newcommand{\bea}{\begin{eqnarray}}
\newcommand{\eea}{\end{eqnarray}}
\newcommand{\bda}{\begin{eqnarray*}}
\newcommand{\eda}{\end{eqnarray*}}
\newcommand{\ba}{\begin{array}}
\newcommand{\ea}{\end{array}}
\newcommand{\R}{\mathbb{R}}%
\DeclareMathOperator*\Graph{Graph}%
\DeclareMathOperator*\Zeros{Zeros}%
\begin{document}
	
\title{A Relaxed Inertial Forward-Backward-Forward Algorithm for Solving Monotone Inclusions with Application to GANs} 	
\author{Radu Ioan Bo{\c t}\footnote{Faculty of Mathematics, University of Vienna, Oskar-Morgenstern-Platz 1, 1090 Vienna, Austria, email: radu.bot@univie.ac.at. Research partially supported by the Austrian Science Fund (FWF), project number I 2419-N32.} \quad Michael Sedlmayer\footnote{Research Platform Data Science @ Uni Vienna, University of Vienna, W{\"a}hringer Stra{\ss}e 29, 1090 Vienna, Austria, e-mail: michael.sedlmayer@univie.ac.at.} \quad Phan Tu Vuong\footnote{Mathematical Sciences, University of Southampton, Highfield Southampton SO17 1BJ, United Kingdom, e-mail: T.V.Phan@soton.ac.uk.}} 
	\date{\today}
	
\maketitle

{\bf Abstract.} We introduce a relaxed inertial forward-backward-forward (RIFBF) splitting algorithm for approaching the set of zeros of the sum of a maximally monotone operator and a single-valued monotone and Lipschitz continuous operator. This work aims to extend Tseng's forward-backward-forward method by both using inertial effects as well as relaxation parameters. We formulate first a second order dynamical system which approaches the solution set of the monotone inclusion problem to be solved and provide an asymptotic analysis for its trajectories. We provide for RIFBF, which follows by explicit time discretization, a convergence analysis in the general monotone case as well as when applied to the solving of pseudo-monotone variational inequalities. We illustrate the proposed method by applications to a bilinear saddle point problem, in the context of which we also emphasize the interplay between the inertial and the relaxation parameters, and to the training of Generative Adversarial Networks (GANs).\vspace{1ex}

{\bf Key words.} forward-backward-forward algorithm, inertial effects, relaxation parameters, continuous time approach, application to GANs\vspace{1ex}

{\bf AMS subject classifications.} 47J20, 90C25, 90C30, 90C52

\section{Introduction} 
\label{sec:intro}
Let $H$ be a real Hilbert space endowed with inner product $\left\langle \cdot, \cdot \right\rangle$ and corresponding norm $\|\cdot\|$,
$A: H \to 2^H$ a maximally monotone set-valued operator, and $B: H \to H$ a monotone and Lipschitz continuous  operator with Lipschitz constant $L >0$, which means that
$$\|Bx-By\| \leq L\|x-y\| \quad \forall x,y \in H.$$
We recall that the operator $A:H\to 2^H$ is called monotone if
\begin{equation*}
\langle u-v,x-y \rangle \geq 0  \quad \forall (x,u), (y,v) \in \Graph(A),
\end{equation*}	
where $\Graph(A) = \{(x,y) \in H \times H : u \in Ax \}$ denotes its graph. The operator $A$ is called maximally monotone if it is monotone and its graph is not properly included in the graph of another monotone operator.

We assume that  $\Zeros (A+B) := \{x \in H : 0 \in Ax + Bx\} \neq \emptyset$, and are interested in solving the following inclusion problem: \vspace{1ex}

Find $x^*\in H$ such that
\begin{equation}\label{sMI}
0 \in Ax^*+Bx^*.
\end{equation}
An important special case is when $A=N_C$, the normal cone of a  nonempty closed convex subset $C$ of $H$. Then \eqref{sMI} reduces to a variational inequality (VI):\vspace{1ex}

Find $x^*\in C$ such that
\begin{equation} \label{VIP}
\left\langle Bx^*, x-x^*\right\rangle  \geq 0 \quad \forall x \in C.
\end{equation}
Solution methods for solving \eqref{sMI}, when the operator $B$ is cocoercive, have been developed intensively in the last decades. Recall that the operator $B : H \to H$ is cocoercive if there is a constant $L > 0$ such that
$$
L \left\langle Bx-By, x-y \right\rangle \ge \|Bx-By\|^2 \quad \forall x,y \in H. 
$$
Notice that every cocoercive operator is Lipschitz continuous and that the gradient of a convex and Fr\'echet differentiable function is a cocoercive operator if and only if it is Lipschitz continuous (\cite{BauschkeCombettes}). 

The simplest method for solving \eqref{sMI}, when $B$ is cocoercive, is the forward-backward (FB) method, which generates an iterative sequence $(x_k)_{k \geq 0}$ via
\begin{equation} \label{FB}
(\forall k \geq 0) \quad x_{k+1}:=J_{\lambda A} (I-\lambda B) x_k,
\end{equation}
where $x_0 \in H$ is the starting point and $J_{\lambda A} :=(I+\lambda A)^{-1} : H \to H$ is the resolvent of the operator $A$. Here, $I$ denotes the identity operator of $H$ and $\lambda$ is a positive stepsize chosen in $(0, \frac{2}{L})$. The resolvent of a maximally monotone operator is a single-valued and cocoercive operator with constant $L=1$. The iterative scheme \eqref{FB} results by time-discretizing with time stepsize equal $1$ the first order dynamical system
\begin{eqnarray*}
\begin{cases}
\begin{aligned}
& \dot{x}(t) + x(t) =  J_{\lambda A} \left( I-\lambda B\right) x(t), \\
&x(0)=x_0.
\end{aligned} 
\end{cases}
\end{eqnarray*}
For more about dynamical systems of implicit type associated to monotone inclusions and convex optimization problems, see  \cite{AA15, Antipin,Bolte03, BC_JMAA}.

Recently, the following second order dynamical system associated with the monotone inclusion problem \eqref{sMI},  when  $B$ is cocoercive,
\begin{eqnarray*}
\begin{cases}
\begin{aligned}
&\ddot{x}(t)+ \gamma(t) \dot{x}(t)+ \tau(t) \left[ x(t) -J_{\lambda A}(I - \lambda B)x(t) \right] =0,\\
&x(0)=x_0, \quad \dot{x}(0)=v_0,
\end{aligned} 
\end{cases}
\end{eqnarray*}
where $\gamma, \tau : [0,+\infty) \rightarrow [0,+\infty)$, was proposed and studied in \cite{BC_SICON} (see also \cite{BC_JMAA, Antipin, Alvarez00}). Explicit time discrerization of this second order dynamical system gives rise to so-called relaxed inertial forward-backward algorithms, which combine inertial effects and relaxation parameters. 

In the last years, Attouch and Cabot have promoted in a series of papers relaxed inertial algorithms for monotone inclusions and convex optimization problems, as they combine the advantages of both inertial effects and relaxation techniques. More precisely, they addressed the relaxed inertial proximal method (RIPA) in \cite{AC_Prox,AC_ProxRate} and the relaxed inertial forward-backward method (RIFB) in \cite{AC_FB}. A relaxed inertial Douglas-Rachford algorithm for monotone inclusions has been proposed in \cite{BCH_iDR}. Iutzeler and Hendrickx investigated in \cite{IutzelerHendrickx} the influence inertial effects and relaxation techniques have on the numerical performances of optimization algorithms. The interplay between relaxation and inertial parameters for relative-error inexact under-relaxed algorithms has been addressed in \cite{MA1, MA2}.

Relaxation techniques are essential ingredients in the formulation of algorithms for monotone inclusions, as they provide more flexibility to the iterative schemes (see \cite{BauschkeCombettes, EcksteinBertsekas}). Inertial effects have been introduced in order to accelerate the convergence of the numerical methods. This technique traces back to the pioneering work of Polyak \cite{Polyak64}, who introduced the heavy ball method in order to speed up the convergence behaviour of the gradient algorithm and allow the detection of different critical points. This idea was employed and refined by Nesterov (see \cite{Nesterov83}) and by Alvarez and Attouch (see \cite{Alvarez00, AA01}) in the context of solving smooth convex minimization problems and monotone inclusions/nonsmooth convex minimization problems, respectively. In the last decade, an extensive literature has been devoted to inertial algorithms.

In this paper we will focus on the solving of the monotone inclusion \eqref{sMI} in the case when $B$ is merely monotone and Lipschitz continuous. To this end we will formulate a relaxed inertial forward-backward-forward (RIFBF) algorithm, which we obtain through the time discretization of a second order dynamical system approaching the solution set of \eqref{sMI}. The forward-backward-forward (FBF) method was proposed by Tseng in \cite{Tseng2001} and it generates an iterative sequence $(x_k)_{k \geq 0}$ via
\begin{equation*} 
		(\forall k \geq 0) \quad \begin{cases}
			y_k = J_{\lambda_k A} (I-\lambda_k B) x_k,\\
			x_{k+1} =  y_k-\lambda_k(By_k-Bx_k),
		\end{cases}
	\end{equation*}
where $x_0 \in H$ is the starting point. The sequence $(x_k)_{k \geq 0}$ converges weakly to a solution of \eqref{sMI} if the sequence of stepsizes $(\lambda_k)_{k \geq 0}$ is chosen in the interval $\left(  0,\frac{1}{L} \right)$, where $L > 0$ is the Lipschitz constant of $B$. An inertial version of FBF was proposed in \cite{BC_iFBF}. 

Recently, a forward-backward algorithm for solving \eqref{sMI}, when $B$ is monotone and Lipschitz continuous, was proposed by Malitsky and Tam in \cite{MaTa}. This method requires in every iteration only one forward step instead of two, however, the sequence of stepsizes has to be chosen constant in the interval $\left(0, \frac{1}{2L}\right)$, which slows the algorithm down in comparison to FBF. A popular algorithm used to solve the variational inequality \eqref{VIP}, when $B$ is monotone and Lipschitz continuous, is Korpelevich extrargadient's method (see \cite{Korpelevich}). The stepsizes are to be chosen in the interval $\left(0, \frac{1}{2L}\right)$, however, this method requires two projection steps on $C$ and two forward steps.

The main motivation for the investigation of monotone inclusions and variational inequalities governed by monotone and Lipschitz continuous operators is represented by minimax convex-concave problems. It is well-known that determining  primal-dual pairs of optimal solutions of convex optimization problems means actually solving minimax convex-concave problems (see \cite{BauschkeCombettes}). Minimax problems arise traditionally in game theory and, more recently, in the training of Generative Adversarial Networks (GANs), as we will see in Section \ref{sec:Numerical}.

In the next section we will approach the solution set of \eqref{sMI} from a continuous perspective by means of the trajectories generated by a second order dynamical system of FBF type.
We will prove an existence and uniqueness result for the generated trajectories and  provide a general setting in which these converge to a zero of $A+B$ as time goes to infinity. In addition, we will show that explicit time discretization of the dynamical system gives rise to an algorithm of forward-backward-forward type with inertial and relaxation parameters (RIFBF).

In Section~\ref{sec:iFBF}) we will discuss the convergence of (RIFBF) and investigate the  interplay between the inertial and the relaxation parameters.  It is of certain relevance to notice that both the standard FBF method, the algorithm in \cite{MaTa} and the extragradient method require to know the Lipschitz constant of $B$, which is not always available. This can be avoided by performing a line-search procedure, which usually leads to additional computation costs. On the contrary, we will use an adaptive stepsize rule which does not require knowledge of the Lipschitz constant of $B$. We will also comment on the convergence of (RIFBF) when applied to the solving of the variational inequality \eqref{VIP} in the case when the operator $B$ is pseudo-monotone but not necessarily monotone. Pseudo-monotone operators appear in the consumer theory of mathematical economics (\cite{HadjisavvasSchaibleWong}) and as gradients of pseudo-convex functions (\cite{CottleFerland}), such as ratios of convex and concave functions in fractional programming (\cite{BL06}).

Concluding, we treat two different numerical experiments supporting our theoretical results in Section~\ref{sec:Numerical}. On the one hand we deal with a bilinear saddle point problem which can be understood as a two-player zero-sum constrained game. In this context, we emphasize the interplay between the inertial and the relaxation parameters. On the other hand we employ variants of (RIFBF) for training generative adversarial networks (GANs), which is a class of machine learning systems where two opposing artificial neural networks compete in a zero-sum game. GANs have achieved outstanding results for producing photorealistic pictures and are typically known to be difficult to optimize. We show that our method outperform ``Extra Adam'', a GAN training approach inspired by the extra-gradient algorithm, which recently achieved state-of-the-art results (see \cite{VIP-GAN}).

\section{A second order dynamical system of FBF type} 
\label{sec:dynamicFBF}
In this section we will focus on the study of the  dynamical system
\begin{eqnarray}\label{DS}
\begin{cases}
\begin{aligned}
&y(t)=J_{\lambda A}(I - \lambda B)x(t),\\
&\ddot{x}(t)+ \gamma(t) \dot{x}(t)+ \tau(t) \left[ x(t) -y(t) - \lambda\left(Bx(t)-By(t)\right) \right] =0,\\
&x(0)=x_0, \quad \dot{x}(0)=v_0,
\end{aligned} 
\end{cases}
\end{eqnarray}
where $\gamma, \tau: [0, +\infty) \to [0, +\infty)$ are Lebesgue measurable function,  $0 < \lambda < \frac{1}{L}$ and $x_0, v_0 \in H$, in connection with the monotone inclusion problem \eqref{sMI}.

We define $M : H \to H$ by 
\begin{equation}\label{DST}
Mx=x - J_{\lambda A}(I - \lambda B)x - \lambda \left[Bx-B\circ J_{\lambda A}(I - \lambda B)x\right].
\end{equation}
Then \eqref{DS} can be equivalently written as 
\begin{eqnarray}\label{DS1}
\begin{cases}
\begin{aligned}
&\ddot{x}(t)+ \gamma(t) \dot{x}(t)+ \tau(t) M x(t) =0,\\
&x(0)=x_0,\quad \dot{x}(0)=v_0.
\end{aligned} 
\end{cases}
\end{eqnarray}

The following result collects some properties of $M$. 

\begin{Proposition} \label{Tproperties}
Let $M$ be defined as in~\eqref{DST}. Then the following statements hold:
\begin{itemize}
\item[(i)] $\Zeros(M) = \Zeros (A+B)$;
\item[(ii)] $M$ is Lipschitz continuous;
\item[(iii)] There exists a positive constant $\kappa>0$ such that for all $x^* \in  \Zeros(M)$ and all $x \in H$ it holds
$$
\left\langle Mx, x-x^*\right\rangle \ge \kappa \|Mx\|^2.
$$ 
\end{itemize}
\end{Proposition}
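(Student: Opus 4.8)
The plan is to treat the three items in sequence, writing $T := J_{\lambda A}(I-\lambda B)$ for the forward-backward operator, so that $Mx = (x-Tx) - \lambda\bigl(Bx - B(Tx)\bigr)$; the rewriting $Mx = (I-\lambda B)x - (I-\lambda B)(Tx)$ will be convenient later. For (i) I would first recall the chain of equivalences $x^* \in \Zeros(A+B) \Leftrightarrow -Bx^* \in Ax^* \Leftrightarrow x^* = J_{\lambda A}(x^* - \lambda Bx^*) = Tx^*$, the last step using $J_{\lambda A} = (I+\lambda A)^{-1}$. If $x^*$ is a fixed point of $T$, then $Bx^* = B(Tx^*)$ and both bracketed differences in $Mx^*$ vanish, so $Mx^* = 0$. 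Conversely, $Mx = 0$ means $x - Tx = \lambda\bigl(Bx - B(Tx)\bigr)$; taking norms and invoking the $L$-Lipschitz continuity of $B$ gives $\|x-Tx\| \le \lambda L\|x-Tx\|$, and since $\lambda L < 1$ this forces $x = Tx$, i.e. $x \in \Zeros(A+B)$.

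For (ii) I would use the factored form $M = (I-\lambda B) - (I-\lambda B)\circ T$. Since $I-\lambda B$ is $(1+\lambda L)$-Lipschitz and $J_{\lambda A}$ is nonexpansive, $T$ is $(1+\lambda L)$-Lipschitz, and the triangle inequality then yields that $M$ is Lipschitz with constant, for instance, $(1+\lambda L)(2+\lambda L)$. This step is routine.

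The substance is (iii). Fix $x^* \in \Zeros(M) = \Zeros(A+B)$ (using (i)) and $x \in H$, and set $y := Tx = J_{\lambda A}(x - \lambda Bx)$, so that $Mx = (x-y) - \lambda(Bx - By)$. The definition of the resolvent gives $\tfrac{1}{\lambda}(x-y) - Bx \in Ay$, while (i) gives $-Bx^* \in Ax^*$; applying the monotonicity of $A$ to these two pairs yields $\tfrac{1}{\lambda}\langle x-y, y-x^* \rangle \ge \langle Bx - Bx^*, y-x^* \rangle$, and monotonicity of $B$ (via $\langle By - Bx^*, y-x^* \rangle \ge 0$) upgrades the right-hand side to $\langle Bx - By, y-x^* \rangle$. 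Rearranging gives the crucial Tseng-type inequality $\langle Mx, y - x^* \rangle \ge 0$. Then $\langle Mx, x-x^* \rangle = \langle Mx, x-y \rangle + \langle Mx, y-x^* \rangle \ge \langle Mx, x-y \rangle$, and it remains to bound $\langle Mx, x-y \rangle$ below by a multiple of $\|Mx\|^2$: expanding, $\langle Mx, x-y \rangle = \|x-y\|^2 - \lambda\langle Bx-By, x-y \rangle \ge (1-\lambda L)\|x-y\|^2$ by Cauchy--Schwarz and the Lipschitz bound, while $\|Mx\| \le (1+\lambda L)\|x-y\|$ again by the triangle inequality. Combining these gives the claim with $\kappa = \dfrac{1-\lambda L}{(1+\lambda L)^2} > 0$, positive precisely because $\lambda L < 1$.

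The only genuine obstacle I anticipate is keeping the directions of the several monotonicity inequalities for $A$ and $B$ consistent so that they telescope into $\langle Mx, y-x^* \rangle \ge 0$; the rest is bookkeeping with the triangle inequality and the Lipschitz constant of $B$.
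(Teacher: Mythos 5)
Your proposal is correct and follows essentially the same route as the paper: the fixed-point characterization together with the bound $(1-\lambda L)\|x-y\| \le \|Mx\| \le (1+\lambda L)\|x-y\|$ for (i), the triangle-inequality estimate with constant $(1+\lambda L)(2+\lambda L)$ for (ii), and for (iii) the Tseng-type inequality $\langle Mx, y-x^*\rangle \ge 0$ leading to $\kappa = \frac{1-\lambda L}{(1+\lambda L)^2}$. The only cosmetic difference is that you invoke the monotonicity of $A$ and of $B$ separately, whereas the paper applies the monotonicity of $A+B$ once to the inclusion $\frac{1}{\lambda}(x-y)-(Bx-By)\in (A+B)y$; the resulting inequality is identical.
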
 
\begin{proof}
	
(i) For $x \in H$ we set $y:=J_{\lambda A}(I - \lambda B)x$, thus $Mx = x - y - \lambda \left(Bx-By\right)$. Using the Lipschitz continuity of $B$ we have
\begin{equation} \label{kep}
	(1-\lambda L) \|x - y\| \le  \|Mx\| = \|x - y - \lambda \left(Bx-By\right)\|=(1+\lambda L)\|x - y\|.
\end{equation}
	Therefore, $x \in \Zeros(M)$ if and only if $x=y = J_{\lambda A}(I-\lambda B) x$, which is further equivalent to $ x \in \Zeros(A+B)$.
	
(ii) Let $x, x' \in H$ and $y:=J_{\lambda A}(I - \lambda B)x$, and $y':=J_{\lambda A}(I - \lambda B)x'$. The Lipschitz continuity of $B$ yields
\begin{eqnarray*}
\|Mx-Mx'\| &=& \|x - y - \lambda \left(Bx-By\right) - x' + y' + \lambda \left(Bx'-By'\right)\|\\
&\le& (1+\lambda L)\left(\|x-x'\|+\|y-y'\| \right). 
\end{eqnarray*}
In addition, by the non-expansiveness (Lipschitz continuity with Lipschitz constant $1$) of $J_{\lambda A}$ and again by the Lipschitz continuity of $B$ we obtain 
\begin{eqnarray*}
\|y-y'\| &=& \|J_{\lambda A}(I - \lambda B) x -J_{\lambda A}(I - \lambda B) x'\|\\
&\le& \|(I - \lambda B) x -(I - \lambda B) x'\| \le (1+\lambda L)\|x-x'\|. 
\end{eqnarray*}
Therefore, 
$$
\|Mx-Mx'\|  \le (1+\lambda L)(2+\lambda L)\|x-x'\|,
$$
which shows that $M$ is Lipschitz continuous with Lipschitz constant $(1+\lambda L)(2+\lambda L)>0$.

(iii) Let $x^* \in H$ be such that $0 \in (A+B)x^*$ and $x \in H$. We denote $y := J_{\lambda A}(I-\lambda B) x$ and can write $(I-\lambda B) x \in (I +\lambda A) y$
or, equivalently,
\begin{align}\label{eV2}
\frac{1}{\lambda} \left( x-y\right) -\left( Bx-By\right) \in (A+B)y. 
\end{align}
Using the monotonicity of $A+B$ we obtain
$$
\left\langle \frac{1}{\lambda} \left( x-y\right) -\left( Bx-By\right), y-x^* \right\rangle \ge 0,
$$ 
which is equivalent to
$$
\left\langle x-y- \lambda \left( Bx-By\right), x-x^*\right\rangle \ge \left\langle x-y- \lambda \left( Bx-By\right), x-y\right\rangle.
$$
This means that
\begin{eqnarray*}
\left\langle Mx, x-x^*\right\rangle 
&\ge &\left\langle x-y- \lambda \left( Bx-By\right), x-y\right\rangle = \|x-y\|^2 -\lambda \left\langle Bx-By, x-y \right\rangle \\
&\ge& \|x-y\|^2 -\lambda \|Bx-By\| \| x-y \| \ge (1-\lambda L) \|x-y\|^2 \\
&\ge& \frac{1-\lambda L}{(1+\lambda L)^2} \|Mx\|^2,
\end{eqnarray*}
where the last inequality follows from \eqref{kep}. 
Therefore, (iii) holds with $\kappa := \frac{1-\lambda L}{(1+\lambda L)^2}>0$.
\end{proof}

The following definition makes explicit which kind of solutions of the dynamical system \eqref{DS} we are looking for. We recall that a function $x: [0,b] \to H$ (where $b>0$) is said to be absolutely continuous if there exists an integrable function $y:[0,b] \to H$ such that
	$$
	x(t) = x(0) + \int_{0}^{t} y(s) ds \quad \forall t \in[0,b].
	$$
This is nothing else than $x$ is continuous and its distributional derivative $\dot{x}$ is Lebesgue integrable on $[0,b]$.  

\begin{Definition}
We say that $x: [0,+\infty) \to H$ is a strong global solution of \eqref{DS} if the following properties are satisfied:
\begin{itemize}
	\item[(i)] $x$,  $\dot{x}: [0,+\infty) \to H$ are locally absolutely continuous, in other words, absolutely continuous on each interval $[0,b]$ for $0<b<+\infty$;
	\item[(ii)] $\ddot{x}(t)+ \gamma(t) \dot{x}(t)+ \tau(t) M x(t) =0 $ for almost every $t \in [0, +\infty)$;
	\item[(iii)] $x(0) = x_0$ and $\dot{x}(0)=v_0$.
\end{itemize}
\end{Definition}

Since $M : H \rightarrow H$ is Lipschitz continuous, the existence and uniqueness of the trajectory of \eqref{DS} follows from the Cauchy-Lipschitz Theorem for absolutely continuous trajectories.

\begin{Theorem} (see \cite[Theorem 4]{BC_SICON})
Let $\gamma, \tau: [0, +\infty) \to  [0, +\infty)$ be Lebesgue measurable functions such that $\gamma, \tau \in L^1_{loc}([0, +\infty))$ (that is $\gamma, \tau \in L^1([0, b])$ for all $0<b<+\infty$). Then for each $x_0,v_0 \in H$ there exists a unique strong global solution of the dynamical system \eqref{DS}.
\end{Theorem}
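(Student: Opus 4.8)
The plan is to recast the second-order system \eqref{DS1} as a first-order system on the product Hilbert space $H \times H$ and then invoke the Cauchy--Lipschitz (Picard--Lindel\"of) theorem in its Carath\'eodory form, which accommodates right-hand sides that are merely measurable (and locally integrable) in the time variable. Introducing the phase variable $X(t) = (x(t), \dot x(t))$, the system \eqref{DS1} becomes $\dot X(t) = F(t, X(t))$, $X(0) = (x_0, v_0)$, with
$$F : [0,+\infty) \times (H\times H) \to H \times H, \qquad F(t, (u,v)) = \big(v,\, -\gamma(t) v - \tau(t) M u\big).$$
A function $x$ is a strong global solution of \eqref{DS} in the sense of the Definition above precisely when the associated $X$ is locally absolutely continuous, satisfies $\dot X = F(\cdot, X)$ almost everywhere, and $X(0) = (x_0, v_0)$.

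Next I would verify the three hypotheses of the Carath\'eodory existence-and-uniqueness theorem. First, measurability in time: for fixed $(u,v)$ the map $t \mapsto F(t,(u,v))$ is measurable because $\gamma$ and $\tau$ are. Second, the Lipschitz property in the state variable: by Proposition~\ref{Tproperties}(ii) the operator $M$ is Lipschitz with constant $\ell := (1+\lambda L)(2+\lambda L) > 0$, so
$$\|F(t,(u_1,v_1)) - F(t,(u_2,v_2))\| \le \big(1 + \gamma(t) + \ell\,\tau(t)\big)\big(\|u_1-u_2\| + \|v_1 - v_2\|\big),$$
and $t \mapsto 1 + \gamma(t) + \ell\,\tau(t)$ lies in $L^1_{loc}([0,+\infty))$ by the hypothesis $\gamma, \tau \in L^1_{loc}([0,+\infty))$. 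Third, local integrability of $t \mapsto F(t, X)$ for a fixed $X = (u,v)$: since $\|Mu\| \le \|M0\| + \ell\|u\|$, the norm $\|F(t,(u,v))\|$ is dominated by an $L^1_{loc}$ function of $t$ (an affine combination of $\gamma(t)$ and $\tau(t)$). These three facts produce a unique maximal local solution $X$ on some interval $[0, T_{\max})$.

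Finally I would upgrade local existence to global existence by showing $T_{\max} = +\infty$. Assume for contradiction that $T_{\max} < +\infty$. From the integral representation $X(t) = X(0) + \int_0^t F(s, X(s))\,ds$ and the estimate $\|F(s, X(s))\| \le a(s) + b(s)\|X(s)\|$, where $a(s) = \|M0\|\,\tau(s)$ and $b(s) = 1 + \gamma(s) + \ell\,\tau(s)$ are both in $L^1([0,T_{\max}])$, Gr\"onwall's inequality yields $\sup_{t \in [0, T_{\max})} \|X(t)\| < +\infty$. A bounded maximal trajectory can be extended beyond $T_{\max}$, contradicting maximality; hence $T_{\max} = +\infty$, and $x$ is a strong global solution. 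The only delicate point to keep in mind is that, because $\gamma$ and $\tau$ are assumed only locally integrable rather than continuous, one must consistently use the absolutely continuous (Carath\'eodory) version of the Cauchy--Lipschitz theorem instead of its classical $C^1$ formulation; with that framework fixed, the argument is routine. One may alternatively simply invoke \cite[Theorem 4]{BC_SICON}, whose proof follows exactly this template.
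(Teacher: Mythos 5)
Your proposal is correct and follows essentially the same route as the paper, which establishes the Lipschitz continuity of $M$ (Proposition~\ref{Tproperties}(ii)) and then invokes the Cauchy--Lipschitz theorem for absolutely continuous trajectories from \cite[Theorem 4]{BC_SICON}, whose proof is precisely the first-order reformulation on $H\times H$ with Carath\'eodory-type hypotheses that you carry out. Your explicit Gr\"onwall extension to $T_{\max}=+\infty$ is a fine way to make the global statement self-contained, but it adds nothing beyond what the cited global Cauchy--Lipschitz result already delivers.
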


We will prove the convergence of the trajectories of \eqref{DS} in a setting which requires the damping function $\gamma$ and the relaxation function $\tau$ to fulfil the assumptions below.  We refer to \cite{BC_SICON} for examples of functions which fulfil this assumption and want also to emphasize that when the two functions are constant we recover the conditions from \cite{Attouch_Mainge}.

\begin{assumption}\label{AssumpPara}
$\gamma, \tau: [0, +\infty) \to  [0, +\infty)$ are locally absolutely continuous and there exists $\theta > 0$ such that for almost every $t \in [0, +\infty)$ it holds
$$
\dot{\gamma}(t) \le 0 \le \dot{\tau} (t) \quad \text{and} \quad \frac{\gamma^2 (t)}{\tau (t)} \ge \frac{1+\theta}{\kappa}.
$$
\end{assumption}

The result which states the convergence of the trajectories is adapted from \cite[Theorem 8]{BC_SICON}. Though, it cannot be obtained as a direct consequence of it, since the operator $M$ is not cocoercive as it is required to be in \cite[Theorem 8]{BC_SICON}. However, as seen in Proposition \ref{Tproperties} (iii), $M$ has a property, sometimes called ``coercive with respect to its set of zeros'', which is by far weaker than coercivity, but strong enough in order to allow us to partially apply the techniques used to prove \cite[Theorem 8]{BC_SICON}.

\begin{Theorem}
Let $\gamma, \tau: [0, +\infty) \to  [0, +\infty)$ be functions satisfying Assumption \ref{AssumpPara} and $x_0, v_0 \in H$. Let $x: [0, +\infty) \to  H$ be the unique strong global solution of \eqref{DS}. Then the following statements are true:
\begin{itemize}
	\item[(i)] the trajectory $x$ is bounded and $\dot{x}, \ddot{x}, Mx \in L^2 \left([0,+\infty); H \right)$;
	\item[(ii)] $\lim_{t \to +\infty } \dot{x}(t) = \lim_{t \to +\infty } \ddot{x}(t) =\lim_{t \to +\infty } Mx(t) = \lim_{t \to +\infty } \left[ x(t)-y(t)\right]=0$;
	\item[(iii)] $x(t)$ converges weakly to an element in $\Zeros (A+B)$ as $t \to +\infty$.
\end{itemize}
\end{Theorem}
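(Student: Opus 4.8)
Fix $x^{*}\in\Zeros(A+B)=\Zeros(M)$ (Proposition~\ref{Tproperties}(i)), which is nonempty by hypothesis, and set $h(t):=\tfrac12\|x(t)-x^{*}\|^{2}$. The whole argument is a Lyapunov analysis in the spirit of \cite[Theorem~8]{BC_SICON} and \cite{Attouch_Mainge}, the decisive difference being that the cocoercivity of $M$ used there is only available in the weaker form of Proposition~\ref{Tproperties}(iii). First I would read off from Assumption~\ref{AssumpPara} that $\gamma$ and $\tau$ are bounded and bounded away from $0$: since $\dot\gamma\le 0$, $\gamma$ is non-increasing, so (assuming $\tau(0)>0$, otherwise shift the time origin) $0<\tau(0)\le\tau(t)\le\tfrac{\kappa}{1+\theta}\gamma(t)^{2}\le\tfrac{\kappa}{1+\theta}\gamma(0)^{2}$ and $\gamma(t)\ge\gamma_{\infty}:=\lim_{s\to+\infty}\gamma(s)\ge\big(\tfrac{1+\theta}{\kappa}\tau(0)\big)^{1/2}>0$ for all $t\ge0$. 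Differentiating $h$ twice along the trajectory and using \eqref{DS1} together with Proposition~\ref{Tproperties}(iii) then yields the scalar second order differential inequality $\ddot h(t)+\gamma(t)\dot h(t)+\kappa\,\tau(t)\|Mx(t)\|^{2}\le\|\dot x(t)\|^{2}$ for a.e.\ $t\ge0$.

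For statement~(i) I would introduce an energy functional $\mathcal E$ of the type used in \cite{BC_SICON}, i.e.\ a suitable combination of $\|x(t)-x^{*}\|^{2}$, $\langle\dot x(t),x(t)-x^{*}\rangle$ and $\|\dot x(t)\|^{2}$ with (possibly time dependent) weights adapted to $\gamma$ and $\tau$, and show, by differentiating along the trajectory and using \eqref{DS1}, the three ingredients of Assumption~\ref{AssumpPara} ($\dot\gamma\le0$, $\dot\tau\ge0$, $\gamma^{2}/\tau\ge(1+\theta)/\kappa$), Proposition~\ref{Tproperties}(iii) and Young's inequality (to absorb the sign indefinite term $\tau(t)\langle Mx(t),\dot x(t)\rangle$), that $\tfrac{d}{dt}\mathcal E(t)\le-c\big(\|\dot x(t)\|^{2}+\tau(t)\|Mx(t)\|^{2}\big)$ for a.e.\ $t\ge0$ and some $c>0$. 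A completion of squares gives a lower bound of the form $\mathcal E(t)\ge c_{1}\|x(t)-x^{*}\|^{2}-c_{2}$; hence $\mathcal E$ is non-increasing and convergent, $x$ is bounded, $\dot x\in L^{2}([0,+\infty);H)$ and $\sqrt{\tau}\,Mx\in L^{2}([0,+\infty);H)$, and because $\tau(t)\ge\tau(0)>0$ also $Mx\in L^{2}([0,+\infty);H)$. Since $\ddot x=-\gamma\dot x-\tau Mx$ with $\gamma,\tau$ bounded, $\ddot x\in L^{2}([0,+\infty);H)$ too, which proves~(i). This construction of $\mathcal E$, together with the verification that $\tfrac{d}{dt}\mathcal E\le0$, is the step I expect to be the main obstacle: since $M$ is not cocoercive but only ``coercive with respect to its zeros'' as in Proposition~\ref{Tproperties}(iii), the computation of \cite[Theorem~8]{BC_SICON} cannot be transcribed verbatim, and one must check carefully, exploiting the precise bound $\gamma^{2}/\tau\ge(1+\theta)/\kappa$, that the indefinite cross term $\tau(t)\langle Mx(t),\dot x(t)\rangle$ is still dominated by the dissipative contributions $\gamma(t)\|\dot x(t)\|^{2}$ and $\kappa\,\tau(t)\|Mx(t)\|^{2}$ arising from the damping and from property~(iii).

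For~(ii), $\dot x,\ddot x\in L^{2}$ gives $\tfrac{d}{dt}\|\dot x(t)\|^{2}=2\langle\dot x(t),\ddot x(t)\rangle\in L^{1}$, so $\|\dot x(\cdot)\|^{2}$ has a limit, necessarily $0$ since $\dot x\in L^{2}$; thus $\dot x(t)\to0$. As $M$ is Lipschitz (Proposition~\ref{Tproperties}(ii)), $t\mapsto Mx(t)$ is locally absolutely continuous with $\|\tfrac{d}{dt}Mx(t)\|$ bounded by a multiple of $\|\dot x(t)\|$, whence $\tfrac{d}{dt}\|Mx(t)\|^{2}\in L^{1}$ and, since $Mx\in L^{2}$, $Mx(t)\to0$. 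Then \eqref{kep} gives $(1-\lambda L)\|x(t)-y(t)\|\le\|Mx(t)\|\to0$, i.e.\ $x(t)-y(t)\to0$, and $\ddot x(t)=-\gamma(t)\dot x(t)-\tau(t)Mx(t)\to0$ because $\gamma,\tau$ are bounded.

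For~(iii) I would use the continuous version of Opial's lemma. Its first hypothesis---existence of $\lim_{t\to+\infty}\|x(t)-x^{*}\|$ for every $x^{*}\in\Zeros(A+B)$---follows by applying a standard lemma on second order differential inequalities (see, e.g., \cite{BC_SICON,Attouch_Mainge}) to $\ddot h+\gamma\dot h\le\|\dot x\|^{2}$: the right hand side is in $L^{1}$, $\gamma\ge\gamma_{\infty}>0$ and $h\ge0$, so $h$ converges. For the second hypothesis, let $t_{n}\to+\infty$ with $x(t_{n})\rightharpoonup\bar x$; then $y(t_{n})\rightharpoonup\bar x$ as well (since $x(t)-y(t)\to0$), and by \eqref{eV2} the elements $u_{n}:=\tfrac1\lambda\big(x(t_{n})-y(t_{n})\big)-\big(Bx(t_{n})-By(t_{n})\big)\in(A+B)y(t_{n})$ converge strongly to $0$, because $x(t_{n})-y(t_{n})\to0$ and $\|Bx(t_{n})-By(t_{n})\|\le L\|x(t_{n})-y(t_{n})\|\to0$. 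As $A+B$ is maximally monotone, its graph is sequentially closed in the weak$\times$strong topology, hence $0\in(A+B)\bar x$, i.e.\ $\bar x\in\Zeros(A+B)$; Opial's lemma then gives weak convergence of $x(t)$ to a point of $\Zeros(A+B)$.
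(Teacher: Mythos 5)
Your proposal is correct and follows essentially the same route as the paper: the same Lyapunov function $h(t)=\tfrac12\|x(t)-x^*\|^2$, the same key inequality $\ddot h(t)+\gamma(t)\dot h(t)+\kappa\,\tau(t)\|Mx(t)\|^2\le\|\dot x(t)\|^2$ obtained from \eqref{DS1} and Proposition~\ref{Tproperties}(iii) in place of cocoercivity, and the identical replacement argument for the second Opial condition via \eqref{eV2} and the weak--strong closedness of the graph of the maximally monotone operator $A+B$, followed by the continuous Opial lemma. The only difference is cosmetic: for the energy estimates in (i)--(ii) the paper simply invokes the computation of \cite[Theorem 8]{BC_SICON} (which combines the above inequality with the identity $\|\ddot x+\gamma\dot x\|^2=\tau^2\|Mx\|^2$ coming from the equation, integrates, and uses Assumption~\ref{AssumpPara}), whereas you sketch an equivalent cross-term Lyapunov functional and then spell out the standard $L^2$/$L^1$ arguments for (ii), which is consistent with, and no less detailed than, the paper's own treatment.
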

\begin{proof}
Take an arbitrary $x^* \in \Zeros (A+B) = \Zeros(M)$ and define for all $t \in [0, +\infty)$ the Lyapunov function $h(t)=\frac{1}{2}\|x(t)-x^*\|^2$. For almost every $t \in [0, +\infty)$ we have
$$
\dot{h}(t) = \left\langle x(t)-x^*, \dot{x}(t)\right\rangle \ \mbox{and} \ \ddot{h}(t) = \|\dot{x}(t)\|^2+\left\langle x(t)-x^*, \ddot{x}(t)\right\rangle.
$$
Taking into account \eqref{DS1} we obtain for almost every $t \in [0, +\infty)$ that
$$
\ddot{h}(t) +\gamma (t) \dot{h}(t) +\tau(t) \left\langle x(t)-x^*, Mx(t) \right\rangle  = \|\dot{x}(t)\|^2, 
$$
which, together with  Proposition \ref{Tproperties} (iii), implies
$$
\ddot{h}(t) +\gamma (t) \dot{h}(t) +\kappa \tau(t) \|Mx(t)\|^2  \le \|\dot{x}(t)\|^2.$$

From this point, we can proceed as in the proof of \cite[Theorem 8]{BC_SICON} and, consequently, obtain  the statements in (i) and (ii) and the fact that the limit $\lim_{t \to +\infty} \|x(t)-x^*\| \in \R$ exists,
which is the first assumption in the continuous version of the Opial Lemma (see, for instance, \cite[Lemma 7]{BC_SICON}). In order to show that the second assumption of the Opial Lemma is fulfilled, which means actually proving that every weak sequential cluster point of the trajectory $x$ is a zero of $M$, one cannot use the arguments in \cite[Theorem 8]{BC_SICON}, since $M$ is not maximal monotone. We have to use, instead, different arguments relying on the maximal monotonicity of $A+B$.

Indeed, let $\bar{x}$ be a weak sequential cluster point of $x$, which means that there exists a sequence $t_k \to +\infty$ such that $(x(t_k))_{k \geq 0}$ converges weakly to $\bar{x}$ as $k \to +\infty$. Since, according to (ii), $\lim_{t \to +\infty } Mx(t) = \lim_{t \to +\infty } \left[ x(t)-y(t)\right]=0$, we conclude that $\{y(t_k)\}_{k \geq 0}$ also converges weakly to $\bar{x}$. According to \eqref{eV2} we have
\begin{equation} \label{eV3}
\frac{1}{\lambda} \left( x(t_k)-y(t_k)\right) -\left( Bx(t_k)-By(t_k)\right)  \in (A+B)y(t_k) \quad \forall k \geq 0.
\end{equation}
Since $B$ is Lipschitz continuous and $\lim_{t \to +\infty } \| x(t_k)-y(t_k) \|=0$, the left hand side of \eqref{eV3} converges strongly to $0$ as $k \to +\infty$. Since $A+B$ is maximal monotone, its graph is sequentially closed with respect to the weak-strong topology of the product space $H \times H$. Therefore, taking the limit as $t_k \to +\infty$ in \eqref{eV3} we obtain $\bar{x} \in \Zeros (A+B)$.

Thus, the continuous Opial Lemma implies that $x(t)$ converges weakly to an element in $\Zeros (A+B)$ as $t \to +\infty$.
\end{proof}

\begin{Remark}\label{expdis} {\bf (explicit discretization)} Explicit time discretization of \eqref{DS} with stepsize $h_k >0$, relaxation variable $\tau_k >0$, damping variable $\gamma_k >0$, and initial points $x_0$ and $x_1$ yields for all $k \geq 1$ the following iterative scheme:
\begin{equation} \label{eV13}
\frac{1}{h_k^2} \left( x_{k+1}-2 x_k +x_{k-1} \right) + \frac{ \gamma_k}{h_k} \left( x_{k}-x_{k-1} \right) 
	+ \tau_k Mz_k=0,
\end{equation}
where $z_k$ is an extrapolation of $x_k$ and $x_{k-1}$ that will be chosen later. The Lipschitz continuity of $M$ provides a certain flexibility to this choice. We can write \eqref{eV13} equivalently as
$$
(\forall k \geq 1) \quad x_{k+1} =x_k+(1-\gamma_k h_k) (x_k-x_{k-1}) - h_k^2 \tau_k Mz_k.
$$  
Setting $\alpha_k := 1-\gamma_k h_k$,  $\rho_k := h_k^2 \tau_k$ and choosing $z_k:=x_k+\alpha_k (x_{k}-x_{k-1})$ for all $k \geq 1$, we can write the above scheme as
\begin{eqnarray*} 
	(\forall k \geq 1) \ \begin{cases}
	z_k=x_k+\alpha_k(x_k-x_{k-1})\\
		y_k=J_{\lambda A} (I-\lambda B) z_k\\
		x_{k+1}=(1-\rho_k) z_k +\rho_k \left(y_k-\lambda (By_k-Bz_k) \right),		
	\end{cases}
\end{eqnarray*}
 which is a relaxed version of the FBF algorithm with inertial effects. 
\end{Remark}

\section{A relaxed inertial FBF algorithm} 
\label{sec:iFBF}
In this section we investigate the convergence of the relaxed inertial algorithm derived in the previous section via the time discretization of \eqref{DS}, however, we also assume that the stepsizes $(\lambda_{k})_{k \geq 1}$ are variable. More precisely, we will address the following algorithm
\begin{equation*}
	(RIFBF) \quad \quad \quad (\forall k \geq 1) \
	\begin{cases}
	z_k=x_k+\alpha_k(x_k-x_{k-1})\\
	y_k=J_{\lambda_k A} (I-\lambda_k B) z_k\\
	x_{k+1}=(1-\rho_k) z_k +\rho_k \left(y_k-\lambda_k(By_k-Bz_k) \right),	
	\end{cases}
\end{equation*}
where $x_0,x_1 \in H$ are starting points, $(\lambda_{k})_{k \geq 1}$ and $(\rho_{k})_{k \geq 1}$ are sequences of positive numbers, and $(\alpha_{1})_{k \geq 1}$ is a sequence of non-negative numbers. The following iterative schemes can be obtained as particular sequences of (RIFBF):

\begin{itemize}
	\item  $\rho_k=1$ for all $k \geq 1$: inertial Forward-Backward-Forward algorithm
	\begin{equation*}
		(IFBF) \quad \quad \quad (\forall k \geq 1) \
		\begin{cases}
			z_k=x_k+\alpha_k(x_k-x_{k-1})\\
			y_k=J_{\lambda_k A} (I-\lambda_k B) z_k\\
			x_{k+1}=y_k-\lambda_k(By_k-Bz_k)
		\end{cases}
	\end{equation*}
	\item  $\alpha_k=0$ for all $k \geq 1$: relaxed  Forward-Backward-Forward algorithm
	\begin{equation*}
		\hspace{2.5cm}(RFBF) \quad \quad \quad (\forall k \geq 1) \
		\begin{cases}
			y_k=J_{\lambda_k A} (I-\lambda_k B) x_k\\
			x_{k+1}=(1-\rho_k) x_k +\rho_k \left(y_k-\lambda_k(By_k-Bx_k) \right)		
		\end{cases}
	\end{equation*}
	\item  $\alpha_k=0$,  $\rho_k=1$ for all $k \geq 1$: Forward-Backward-Forward algorithm
	\begin{equation*}
		(FBF) \quad \quad \quad (\forall k \geq 1) \
		\begin{cases}
			y_k = J_{\lambda_k A} (I-\lambda_k B) x_k\\
			x_{k+1}=y_k-\lambda_k(By_k-Bx_k).
		\end{cases}
	\end{equation*}
\end{itemize}

{\bf Stepsize rules:} 

Depending on the availability of the Lipschitz constant $L$ of $B$ , we have two different options for the choice of the sequence of stepsizes $(\lambda_k)_{k \geq 1}$:
\begin{itemize}
	\item {\bf constant stepsize}: $\lambda_k := \lambda \in \left(0, \frac{1}{L}\right)$ for all $k \ge 1$;
	\item {\bf adaptive stepsize}: let $\mu \in (0,1)$ and $ \lambda_1 > 0 $. The stepsizes for $k \geq 1$ are adaptively updated as follows
	\begin{equation} \label{stepsize}
		\lambda_{k+1} :=
		\begin{cases}
			\min \left\lbrace 
		 		\lambda_{k}, \frac{\mu \Vert y_{k} - z_{k} \Vert}{ \Vert B y_{k} - B z_{k} \Vert}
		 	\right\rbrace, 
			& \text{ if } B y_{k} - B z_{k} \not= 0,\\
			\lambda_{k},
			& \text{ otherwise.}
		\end{cases}
	\end{equation}
\end{itemize}

If the Lipschitz constant $L$ of $B$ is known in advance, then a constant stepsize can be chosen. Otherwise, the adaptive stepsize rule \eqref{stepsize} is highly recommended. In the following, we provide the convergence analysis for the adaptive stepsize rule, as the constant stepsize rule can be obtained as a particular of it by setting  $\lambda_1:=\lambda$ and $\mu := \lambda L$.

\begin{Proposition} \label{Lem1} 
Let $\mu \in (0,1)$ and $ \lambda_0 > 0$. The sequence $\left(\lambda_k \right)_{k \geq 1}$ generated by \eqref{stepsize} is nonincreasing and 
	$$
	\lim_{k\to +\infty} \lambda_k=\lambda \geq 
	\min\left\lbrace \lambda_1,\dfrac{\mu}{L}\right\rbrace.
	$$
In addition,
	\begin{equation} \label{Lips}
	\|By_{k}-Bz_{k}\| \leq \frac{\mu}{\lambda_{k+1}} \|y_{k}-z_{k}\| \quad \forall k \geq 1.
	\end{equation}
	\end{Proposition}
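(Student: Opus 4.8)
The plan is to prove the two assertions of Proposition~\ref{Lem1} separately: first that $(\lambda_k)_{k\geq 1}$ is nonincreasing and bounded below by $\min\{\lambda_1,\mu/L\}$, hence convergent, and second the inequality \eqref{Lips}.

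For the monotonicity, observe directly from the update rule \eqref{stepsize} that in both cases $\lambda_{k+1}\leq\lambda_k$, so $(\lambda_k)_{k\geq 1}$ is nonincreasing. For the lower bound, I would argue by induction that $\lambda_k\geq\min\{\lambda_1,\mu/L\}$ for every $k\geq 1$: the base case is trivial, and for the inductive step, if $By_k-Bz_k\neq 0$ then the Lipschitz continuity of $B$ gives $\|By_k-Bz_k\|\leq L\|y_k-z_k\|$, hence $\frac{\mu\|y_k-z_k\|}{\|By_k-Bz_k\|}\geq\frac{\mu}{L}$; combining this with the induction hypothesis $\lambda_k\geq\min\{\lambda_1,\mu/L\}$ yields $\lambda_{k+1}=\min\{\lambda_k,\frac{\mu\|y_k-z_k\|}{\|By_k-Bz_k\|}\}\geq\min\{\lambda_1,\mu/L\}$; the other case $\lambda_{k+1}=\lambda_k$ is immediate. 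A monotone sequence bounded below converges, so $\lim_{k\to+\infty}\lambda_k=\lambda$ exists and satisfies $\lambda\geq\min\{\lambda_1,\mu/L\}$.

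For the inequality \eqref{Lips}, I would again split into the two cases of the update. If $By_k-Bz_k\neq 0$, then by definition $\lambda_{k+1}\leq\frac{\mu\|y_k-z_k\|}{\|By_k-Bz_k\|}$, which rearranges to $\|By_k-Bz_k\|\leq\frac{\mu}{\lambda_{k+1}}\|y_k-z_k\|$. If $By_k-Bz_k=0$, then the left-hand side of \eqref{Lips} is zero while the right-hand side is nonnegative (since $\lambda_{k+1}=\lambda_k>0$ by the lower bound just established and $\mu>0$), so the inequality holds trivially.

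There is essentially no hard step here; the only point requiring a little care is the division in the update rule, which is well-defined precisely because the first branch is only invoked when $By_k-Bz_k\neq 0$, and the positivity of all $\lambda_k$ (needed so that $\mu/\lambda_{k+1}$ makes sense in \eqref{Lips}) follows from the lower bound $\lambda_k\geq\min\{\lambda_1,\mu/L\}>0$. I would also remark that the statement as written contains a minor inconsistency—it introduces $\lambda_0>0$ whereas the recursion \eqref{stepsize} and the algorithm are indexed from $\lambda_1$—so in the proof I would simply work with $\lambda_1>0$ as the initialization, consistent with \eqref{stepsize}.
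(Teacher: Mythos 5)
Your proof is correct and follows essentially the same route as the paper: monotonicity read off directly from \eqref{stepsize}, the lower bound $\min\{\lambda_1,\mu/L\}$ obtained from the Lipschitz continuity of $B$ (you merely spell out the induction that the paper leaves implicit), convergence of a monotone bounded sequence, and \eqref{Lips} as an immediate rearrangement of the update rule. Your remark about the $\lambda_0$ versus $\lambda_1$ indexing is also apt; the paper indeed initializes the rule with $\lambda_1>0$.
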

\begin{proof} It is obvious from ~\eqref{stepsize} that  $\lambda_{k+1}\le \lambda_k$ for all $k\geq  1$. Since $B$ is Lipschitz continuous with Lipschitz constant $L$, it yields
	\begin{equation*}
	\frac{\mu \|y_{k}-z_{k}\|}{\|By_{k}-Bz_{k}\|} \geq \dfrac{\mu}{L}, \ \text{if} \ By_{k}-Bz_{k} \not= 0,
	\end{equation*}
	which together with \eqref{stepsize} yields
	\begin{equation*}
	\lambda_{k+1}\geq \min\left\lbrace \lambda_1,\dfrac{\mu}{L}\right\rbrace \quad \forall k \geq 1.
	\end{equation*}
Thus, there exists 
	$$
	\lambda:=\lim_{k\to +\infty} \lambda_k  \geq 
	\min\left\lbrace \lambda_1,\dfrac{\mu}{L}\right\rbrace.
	$$
	The inequality \eqref{Lips} follows directly from \eqref{stepsize}.
\end{proof}

\begin{Proposition} \label{Prop1}
Let $t_k:=y_k -  \lambda_k (By_k-Bz_k)$ and $\theta_k:=\frac{\lambda_k }{\lambda_{k+1}}$ for all $k \geq 1$. Then for all $x^* \in \Zeros(A+B)$ it holds
	\begin{equation}\label{MainIne}
	\|t_k-x^*\|^2\le \|z_k-x^*\|^2-\left(1-\mu ^2 \theta_k ^2\right) \|y_{k} - z_k\|^2 \quad \forall k \ge 1.
	\end{equation}
\end{Proposition}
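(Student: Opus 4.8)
The plan is to reproduce, in the variable‑stepsize setting, the classical Tseng estimate. The starting point is to rewrite the backward step $y_k = J_{\lambda_k A}(I-\lambda_k B)z_k$ as an inclusion: it is equivalent to $z_k - \lambda_k B z_k - y_k \in \lambda_k A y_k$, and hence, adding $B y_k$ to both sides and dividing by $\lambda_k$, to
$$\frac{1}{\lambda_k}(z_k - y_k) - \left(B z_k - B y_k\right) \in (A+B)y_k .$$
Since $x^* \in \Zeros(A+B)$, that is $0 \in (A+B)x^*$, the monotonicity of $A+B$ will give $\left\langle \frac{1}{\lambda_k}(z_k-y_k) - (Bz_k - By_k),\, y_k - x^*\right\rangle \ge 0$. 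The key observation is that $t_k - z_k = (y_k - z_k) - \lambda_k(By_k - Bz_k) = -\lambda_k\!\left[\frac{1}{\lambda_k}(z_k-y_k) - (Bz_k - By_k)\right]$, so this inequality is precisely $\left\langle t_k - z_k,\, y_k - x^*\right\rangle \le 0$.

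Next I would expand $\|t_k - x^*\|^2$ by inserting $y_k$ and then $z_k$. Writing $\|t_k - x^*\|^2 = \|t_k - y_k\|^2 + 2\langle t_k - y_k, y_k - x^*\rangle + \|y_k - x^*\|^2$ and $\|y_k - x^*\|^2 = \|y_k - z_k\|^2 + 2\langle y_k - z_k, z_k - x^*\rangle + \|z_k - x^*\|^2$, then using $\langle y_k - z_k, z_k - x^*\rangle = -\|y_k - z_k\|^2 + \langle y_k - z_k, y_k - x^*\rangle$ and regrouping the two remaining inner products into $2\langle (t_k - y_k) + (y_k - z_k), y_k - x^*\rangle = 2\langle t_k - z_k, y_k - x^*\rangle$, one obtains the identity
$$\|t_k - x^*\|^2 = \|z_k - x^*\|^2 - \|y_k - z_k\|^2 + \|t_k - y_k\|^2 + 2\langle t_k - z_k,\, y_k - x^*\rangle .$$
The monotonicity inequality from the first step discards the last summand, leaving $\|t_k - x^*\|^2 \le \|z_k - x^*\|^2 - \|y_k - z_k\|^2 + \|t_k - y_k\|^2$.

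Finally, since $t_k - y_k = -\lambda_k(By_k - Bz_k)$, the adaptive stepsize estimate \eqref{Lips} of Proposition \ref{Lem1} gives $\|t_k - y_k\|^2 = \lambda_k^2\|By_k - Bz_k\|^2 \le \lambda_k^2\,\frac{\mu^2}{\lambda_{k+1}^2}\,\|y_k - z_k\|^2 = \mu^2\theta_k^2\|y_k - z_k\|^2$, recalling $\theta_k = \lambda_k/\lambda_{k+1}$. Substituting this bound yields exactly \eqref{MainIne}.

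There is no genuine obstacle here; the argument is a careful but routine Hilbert‑space computation. The two points that require attention are: correctly deriving the inclusion for $y_k$ with respect to the \emph{sum} operator $A+B$ (so that a single monotonicity step applies and the forward‑correction term is absorbed), and the bookkeeping of the cross terms so that they collapse precisely into $\langle t_k - z_k, y_k - x^*\rangle$, the quantity controlled by monotonicity.
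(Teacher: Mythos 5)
Your proposal is correct and follows essentially the same route as the paper: the same inclusion $\tfrac{1}{\lambda_k}(z_k-t_k)\in (A+B)y_k$ combined with monotonicity of $A+B$ to obtain $\langle z_k-t_k,\,y_k-x^*\rangle\ge 0$, and the same use of \eqref{Lips} to bound $\|t_k-y_k\|^2\le \mu^2\theta_k^2\|y_k-z_k\|^2$. The only cosmetic difference is that you expand $\|t_k-x^*\|^2$ by inserting $y_k$ and $z_k$, while the paper expands $\|z_k-x^*\|^2$ via the decomposition $z_k-y_k+y_k-t_k+t_k-x^*$; these are the same identity read in opposite directions.
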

\begin{proof}
Let $k \geq 1$. Using \eqref{Lips} we have that
\begin{eqnarray}\label{intermed}
	\|z_k-x^*\|^2 &=& \|z_k-y_k+y_k-t_k+t_k-x^*\|^2\nonumber\\ 
	&=& \|z_k-y_k\|^2+\|y_k-t_k\|^2+\|t_k-x^*\|^2\nonumber\\
	&& \quad + 2\left\langle z_k-y_k,y_k-x^*\right\rangle +2\left\langle y_k-t_k,t_k-x^*\right\rangle \nonumber\\
	&=& \|z_k-y_k\|^2-\|y_k-t_k\|^2+\|t_k-x^*\|^2+2\left\langle z_k-t_k,y_k-x^*\right\rangle\nonumber \\
	&=& \|z_k-y_k\|^2-\lambda_k ^2 \|By_k-Bz_k\|^2+\|t_k-x^*\|^2+2\left\langle z_k-t_k,y_k-x^*\right\rangle \nonumber\\
	&\geq & \|z_k-y_k\|^2-\frac{\mu ^2 \lambda_k ^2}{\lambda_{k+1} ^2}  \|y_k-z_k\|^2+\|t_k-x^*\|^2+2\left\langle z_k-t_k,y_k-x^*\right\rangle.
\end{eqnarray}
On the other hand, since
$$
(I+\lambda_k A) y_k \ni (I-\lambda_k B)z_k,
$$
we have
$$
z_k \in y_k + \lambda_k A y_k+\lambda_k Bz_k=y_k - \lambda_k (By_k-Bz_k)+\lambda_k(A+B)y_k=t_k+\lambda_k(A+B)y_k.
$$
Therefore, 
$$
\frac{1}{\lambda_k}(z_k - t_k )\in (A+B)y_k, 
$$
which, together with $0 \in (A+B)x^*$ and the monotonicity of $A+B$, implies
$$
\left\langle z_k-t_k,y_k-x^*\right\rangle \ge 0.
$$
Hence,
$$
	\|z_k-x^*\|^2 \ge  \|z_k-y_k\|^2-\frac{\mu ^2 \lambda_k ^2}{\lambda_{k+1} ^2} \|y_k-z_k\|^2+\|t_k-x^*\|^2,
$$
which is \eqref{MainIne}.
\end{proof}

The following result introduces a discrete Lyapunov function for which a decreasing property is established.

\begin{Proposition} \label{Lem2bis} Let $(\alpha_k)_{k \geq 1}$ be a non-decreasing sequence of non-negative numbers and $(\rho_k)_{k \geq 1}$ a sequence of positive numbers, let $x^* \in \Zeros(A+B)$ and for all $k \geq 1$ define
	\begin{align}\label{Hk}
	H_k :=\|x_k-x^*\|^2 - \alpha_k \|x_{k-1}-x^*\|^2 
	+  2\alpha_k \left(\alpha_k+ \frac{1-\alpha_k}{\rho_k (1+\mu \theta_k)} \right) \|x_k - x_{k-1}\|^2.
	\end{align}	
	Then there exists $ k_{0} \geq 1$ such that for all $ k \geq k_{0} $ it holds
	\begin{equation} \label{Descent1}
	H_{k+1}-H_k  \leq - \delta_k \|x_{k+1}-x_k\|^2,
	\end{equation}
	where 
	$$
	\delta_k:=  \left( 1-\alpha_k \right) \left(\frac{2}{\rho_k (1+\mu \theta_k)}-1 \right) 
	-  2\alpha_{k+1} \left(\alpha_{k+1}+ \frac{1-\alpha_{k+1}}{\rho_{k+1} (1+\mu \theta_{k+1})} \right) \quad \forall k \geq 1.
	$$
\end{Proposition}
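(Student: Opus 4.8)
The plan is to unwind the (RIFBF) recursion in three layers — the forward--backward--forward core step (governed by Proposition~\ref{Prop1}), the relaxation, and the inertial extrapolation — and then to carry out the Lyapunov bookkeeping. Throughout I fix $x^* \in \Zeros(A+B)$, write $w_k := \frac{2}{\rho_k(1+\mu\theta_k)} - 1$, and repeatedly use the algebraic identity $\|(1-\rho)a+\rho b\|^2 = (1-\rho)\|a\|^2 + \rho\|b\|^2 - \rho(1-\rho)\|a-b\|^2$, valid for every $\rho \in \R$.

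First, the relaxation step $x_{k+1} = (1-\rho_k)z_k + \rho_k t_k$ gives
\[
\|x_{k+1}-x^*\|^2 = (1-\rho_k)\|z_k-x^*\|^2 + \rho_k\|t_k-x^*\|^2 - \rho_k(1-\rho_k)\|z_k-t_k\|^2 ,
\]
into which I substitute \eqref{MainIne} (the coefficient $\rho_k>0$ keeps the inequality direction). The two error terms are then re-expressed through $\|x_{k+1}-z_k\|^2$ alone: from $x_{k+1}-z_k = \rho_k(t_k-z_k)$ one has $\|z_k-t_k\|^2 = \rho_k^{-2}\|x_{k+1}-z_k\|^2$, while $t_k = y_k - \lambda_k(By_k-Bz_k)$ together with \eqref{Lips} gives $\|z_k-t_k\| \le (1+\mu\theta_k)\|y_k-z_k\|$, hence $\|y_k-z_k\|^2 \ge \rho_k^{-2}(1+\mu\theta_k)^{-2}\|x_{k+1}-z_k\|^2$. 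Since $\theta_k \to 1$ by Proposition~\ref{Lem1} and $\mu<1$, there is $k_0\ge1$ such that for $k\ge k_0$ both $1-\mu^2\theta_k^2 = (1-\mu\theta_k)(1+\mu\theta_k) > 0$ and (using the standing hypotheses on $(\rho_k)$) $\rho_k(1+\mu\theta_k)\le 2$, i.e.\ $w_k\ge 0$. Bounding the $\|y_k-z_k\|^2$-term from below and collecting, the factor $1-\mu^2\theta_k^2$ collapses against $(1+\mu\theta_k)^{-2}$ so that $\frac{1-\mu^2\theta_k^2}{\rho_k(1+\mu\theta_k)^2} + \frac{1-\rho_k}{\rho_k} = w_k$, yielding for all $k\ge k_0$
\[
\|x_{k+1}-x^*\|^2 \le \|z_k-x^*\|^2 - w_k\,\|x_{k+1}-z_k\|^2 .
\]

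Next, the inertial step $z_k-x^* = (1+\alpha_k)(x_k-x^*) - \alpha_k(x_{k-1}-x^*)$ gives
\[
\|z_k-x^*\|^2 = (1+\alpha_k)\|x_k-x^*\|^2 - \alpha_k\|x_{k-1}-x^*\|^2 + \alpha_k(1+\alpha_k)\|x_k-x_{k-1}\|^2 ,
\]
and since $(\alpha_k)$ is non-decreasing and $\|x_k-x^*\|^2\ge0$, subtracting $\alpha_{k+1}\|x_k-x^*\|^2$ from both sides lets me replace the coefficient $1+\alpha_k$ by $1$, so that the ``distance part'' becomes exactly $\|x_k-x^*\|^2 - \alpha_k\|x_{k-1}-x^*\|^2$, matching $H_k$. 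I then form $H_{k+1}-H_k$, plug in these estimates, expand $\|x_{k+1}-z_k\|^2 = \|x_{k+1}-x_k\|^2 - 2\alpha_k\langle x_{k+1}-x_k,x_k-x_{k-1}\rangle + \alpha_k^2\|x_k-x_{k-1}\|^2$, and collect terms. Because the coefficient $\alpha_k + \frac{1-\alpha_k}{\rho_k(1+\mu\theta_k)}$ in \eqref{Hk} was chosen precisely for this, all contributions of $\|x_k-x_{k-1}\|^2$ reduce to $-\alpha_k w_k\|x_k-x_{k-1}\|^2$; the cross term is absorbed via $2\langle x_{k+1}-x_k,x_k-x_{k-1}\rangle \le \|x_{k+1}-x_k\|^2 + \|x_k-x_{k-1}\|^2$ — in the admissible direction exactly because $w_k\ge0$ — whereupon the $\|x_k-x_{k-1}\|^2$-terms cancel and one is left with $H_{k+1}-H_k \le -\delta_k\|x_{k+1}-x_k\|^2$, with $\delta_k$ as in the statement.

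The real obstacle I anticipate is the first step: passing from the two separate error terms $\|y_k-z_k\|^2$ and $\|z_k-t_k\|^2$ to the single clean quantity $w_k\|x_{k+1}-z_k\|^2$. This hinges both on the exact relation $\|z_k-t_k\| = \rho_k^{-1}\|x_{k+1}-z_k\|$ and on the algebraic collapse $\frac{1-\mu^2\theta_k^2}{\rho_k(1+\mu\theta_k)^2} + \frac{1-\rho_k}{\rho_k} = \frac{2}{\rho_k(1+\mu\theta_k)}-1$, and it requires the right choice of $k_0$ so that $1-\mu^2\theta_k^2>0$ (to substitute \eqref{MainIne} correctly) and $w_k\ge0$ (to control the cross term). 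Everything after that is careful but routine algebra, the only remaining subtlety being that the coefficient in \eqref{Hk} has been reverse-engineered to make the $\|x_k-x_{k-1}\|^2$-terms in $H_{k+1}-H_k$ cancel exactly.
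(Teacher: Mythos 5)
Your proof is correct and takes essentially the same route as the paper's: the same relaxation identity combined with Proposition \ref{Prop1}, the same collapse of the two error terms into $\bigl(\tfrac{2}{\rho_k(1+\mu\theta_k)}-1\bigr)\|x_{k+1}-z_k\|^2$ via \eqref{Lips} and the choice of $k_0$, and the same expansions of $\|z_k-x^*\|^2$ and $\|x_{k+1}-z_k\|^2$ with Young's inequality and the monotonicity of $(\alpha_k)$. The only remark is that your explicit sign condition $\tfrac{2}{\rho_k(1+\mu\theta_k)}-1\ge 0$ does not follow from the proposition's stated hypotheses on $(\rho_k)$ (mere positivity), but the paper's proof relies on exactly the same condition implicitly when it substitutes the lower bound on $\|x_{k+1}-z_k\|^2$ into its key inequality, so this is not a departure from the paper's argument.
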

\begin{proof}
From Proposition \ref{Prop1} we have for all $k \geq 1$
\begin{eqnarray} \label{F1bis}
	\|x_{k+1}-x^*\|^2 \nonumber
	&=& \|(1-\rho_k) z_k + \rho_k t_k-x^*\|^2 \\\nonumber 
	&=& \|(1-\rho_k) (z_k-x^*) + \rho_k (t_k-x^*)\|^2 \\\nonumber 
	&=& (1-\rho_k) \|z_k-x^*\|^2 + \rho_k \|t_k-x^*\|^2-\rho_k (1-\rho_k) \|t_k-z_k\|^2 \\\nonumber 
	&\leq & (1-\rho_k) \|z_k-x^*\|^2 + \rho_k \|z_k-x^*\|^2 \\\nonumber
	&& \ - \rho_k \left(1-\mu^2 \theta^2_k\right) \|y_{k} - z_k\|^2 -\frac{1-\rho_k}{\rho_k} \|x_{k+1}-z_k\|^2 \\ 
		&= & \|z_k-x^*\|^2 - \rho_k \left(1-\mu^2 \theta^2_k\right) \|y_{k} - z_k\|^2 -\frac{1-\rho_k}{\rho_k} \|x_{k+1}-z_k\|^2.
\end{eqnarray}
Using \eqref{Lips} we obtain for all $k \geq 1$
\begin{eqnarray*} \label{F10}
\frac{1}{\rho_k} \|x_{k+1}-z_k\| =\|t_k-z_k\| \nonumber
&\leq &  \|t_k-y_k\|+\|y_k-z_k\| =  \lambda_k\|By_k-Bz_k\|+\|y_k-z_k\| \\
&\leq &  \left( 1+\frac{\mu \lambda_k }{\lambda_{k+1}}\right) \|y_k-z_k\|
=\left( 1+ \mu \theta_k\right) \|y_k-z_k\|.
\end{eqnarray*}
Since $\lim_{k \to +\infty} \left(1-\mu^2 \theta_k^2\right) = 1-\mu^2 >0 $, there exists $ k_{0} \geq 1 $ such that 
$$
1-\mu^2 \theta_k^2 > \frac{1-\mu^2}{2} \quad \forall k \geq k_0.
$$
This means that for all $k \ge k_0$, we have
$$
\frac{1-\mu \theta_k}{\rho_k (1+\mu \theta_k)}\|x_{k+1}-z_k\|^2 \le \rho_k \left(1-\mu^2 \theta^2_k\right) \|y_k-z_k\|^2. 
$$
We obtain from \eqref{F1bis} that for all $k \geq k_0$
\begin{eqnarray}\label{VV3}
\|x_{k+1}-x^*\|^2 \nonumber
&\le & \|z_k-x^*\|^2 - \left(\frac{1-\mu \theta_k}{\rho_k (1+\mu \theta_k)}+\frac{1-\rho_k}{\rho_k} \right) \|x_{k+1}-z_k\|^2\\
&= & \|z_k-x^*\|^2 - \left(\frac{2}{\rho_k (1+\mu \theta_k)}-1 \right) \|x_{k+1}-z_k\|^2.
\end{eqnarray}
We will estimate the right-hand side of \eqref{VV3}. For all $k \geq 1$ we have
\begin{eqnarray} \label{F22}
\|z_{k}-x^*\|^2 \nonumber
&=& \|x_k+\alpha_k(x_k-x_{k-1})-x^*\|^2 \\\nonumber 
&=& \|(1+\alpha_k) (x_k-x^*) - \alpha_k (x_{k-1}-x^*)\|^2 \\
&=& (1+\alpha_k) \|x_k-x^*\|^2 - \alpha_k \|x_{k-1}-x^*\|^2 + \alpha_k(1+\alpha_k) \|x_k - x_{k-1}\|^2,
\end{eqnarray}
and
\begin{eqnarray} \label{F23}
\|x_{k+1}-z_k\|^2 \nonumber
&= &  \| (x_{k+1}-x_k)-\alpha_k(x_k-x_{k-1})\|^2 \\\nonumber 
&=&  \|x_{k+1}-x_k\|^2+\alpha_k^2 \|x_k-x_{k-1}\|^2 - 2 \alpha_k \left\langle x_{k+1}-x_k, x_k-x_{k-1} \right\rangle \\
&\ge & \left( 1-\alpha_k \right) \|x_{k+1}-x_k\|^2+\left( \alpha_k^2 - \alpha_k \right) \|x_k-x_{k-1}\|^2.
\end{eqnarray}
Combining \eqref{VV3} with \eqref{F22} and \eqref{F23}  and using that $(\alpha_k)_{k \geq 1}$ is non-decreasing, we obtain for all $k \geq k_0$
\begin{eqnarray} 
\nonumber &&\|x_{k+1}-x^*\|^2 -\alpha_{k+1} \|x_k-x^*\|^2 \\ \nonumber 
&\le &\|x_{k+1}-x^*\|^2 -\alpha_k \|x_k-x^*\|^2 \\ \nonumber
&\le& \|x_k-x^*\|^2 - \alpha_k \|x_{k-1}-x^*\|^2 + \alpha_k(1+\alpha_k) \|x_k - x_{k-1}\|^2
 \\ \nonumber
&&   -  \left(\frac{2}{\rho_k (1+\mu \theta_k)}-1 \right)
\left[ \left( 1-\alpha_k \right) \|x_{k+1}-x_k\|^2+\left( \alpha_k^2 - \alpha_k \right) \|x_k-x_{k-1}\|^2 \right] \\\nonumber 
&=& \|x_k-x^*\|^2 - \alpha_k \|x_{k-1}-x^*\|^2
 +  2\alpha_k \left(\alpha_k+ \frac{1-\alpha_k}{\rho_k (1+\mu \theta_k)} \right)\|x_k-x_{k-1}\|^2
\\ \nonumber
&&   -  \left( 1-\alpha_k \right) \left(\frac{2}{\rho_k (1+\mu \theta_k)}-1 \right) \|x_{k+1}-x_k\|^2,
\end{eqnarray}
which is nothing else than \eqref{Descent1}. 
\end{proof}

In order to further proceed with the convergence analysis, we have to choose the sequences $(\alpha_k)_{k \geq 1}$ and $(\rho_k)_{k \geq 1}$ such that 
$$ \lim\inf_{k \to +\infty}\delta_k >0.$$  
This is a manageable task, since we can choose for example the two sequences such that
$
\lim_{k \to +\infty} \alpha_k = \alpha \geq 0,
$
and
$
 \lim_{k \to +\infty} \rho_k = \rho > 0.
$
Recalling that $\lim_{k \to +\infty} \theta_k = 1$, we obtain 
\begin{eqnarray*} 
	\lim_{k \to +\infty} \delta_k = \left( 1-\alpha \right) \left(\frac{2}{\rho (1+\mu)}-1 \right) 
	-  2\alpha \left(\alpha+ \frac{1-\alpha}{\rho (1+\mu )}  \right)	
	=\frac{2(1-\alpha)^2}{\rho (1+\mu)} -1+\alpha-2\alpha^2,
\end{eqnarray*}
thus, in order to guarantee $\lim_{k \to +\infty} \delta_k >0$ it is sufficient to choose  $\rho$ such that 
\begin{equation} \label{RhoIneq}
	0<\rho < \frac{2}{(1 + \mu)} \frac{(1-\alpha)^2}{(2\alpha^2- \alpha+1)}.
\end{equation}

\begin{Remark}\label{Rem:Rho}{\bf (inertia versus relaxation)}
Inequality~\eqref{RhoIneq} represents the necessary trade-off between inertia and relaxation (see Figure~\ref{FracProgBox1} for two particular choices of $ \mu $). The expression is similar to the one obtained in~\cite[Remark 2.13]{AC_Prox}, the exception being an additional factor incorporating the stepsize parameter $ \mu $.
	This means  that for given $ 0 \leq \alpha < 1 $ the upper bound for the relaxation parameter is $ \overline{\rho}(\alpha, \mu) =  \frac{2}{(1+\mu)} \frac{(1-\alpha)^2}{(2\alpha^2- \alpha+1)} $.
	We further see that $ \alpha \mapsto \overline{\rho}(\alpha, \mu) $ is a decreasing function on the interval $ \left[ 0, 1 \right] $.
	Hence, the maximal value for the limit of the sequence of relaxation parameters is obtained when $ \alpha = 0 $  and is $ \rho_{\max}(\mu) := \overline{\rho}(0, \mu) = \frac{2}{1 + \mu} $.
	On the other hand, when $ \alpha \nearrow 1 $, then $ \overline{\rho}(\alpha, \mu) \searrow 0 $.
	In addition, the function $ \mu \mapsto \rho_{\max}(\mu) $ is also decreasing on $ \left[ 0, 1 \right] $ with limiting values 2 as $ \mu \searrow 0 $, and 1 as $ \mu \nearrow 1 $.
\end{Remark}
\begin{figure}[ht!]
		\centering
		\includegraphics[width=0.49\textwidth]{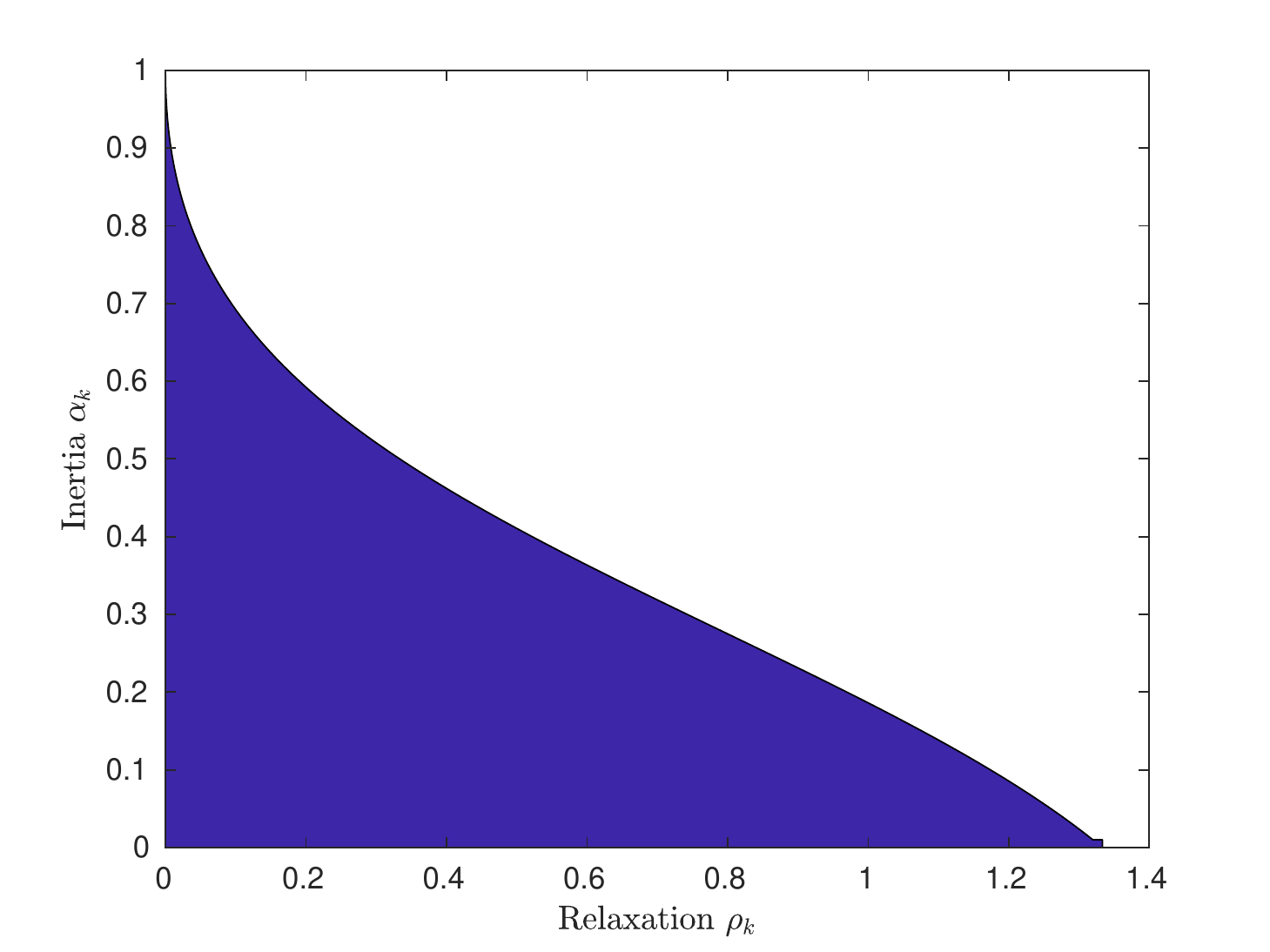}
		\includegraphics[width=0.49\textwidth]{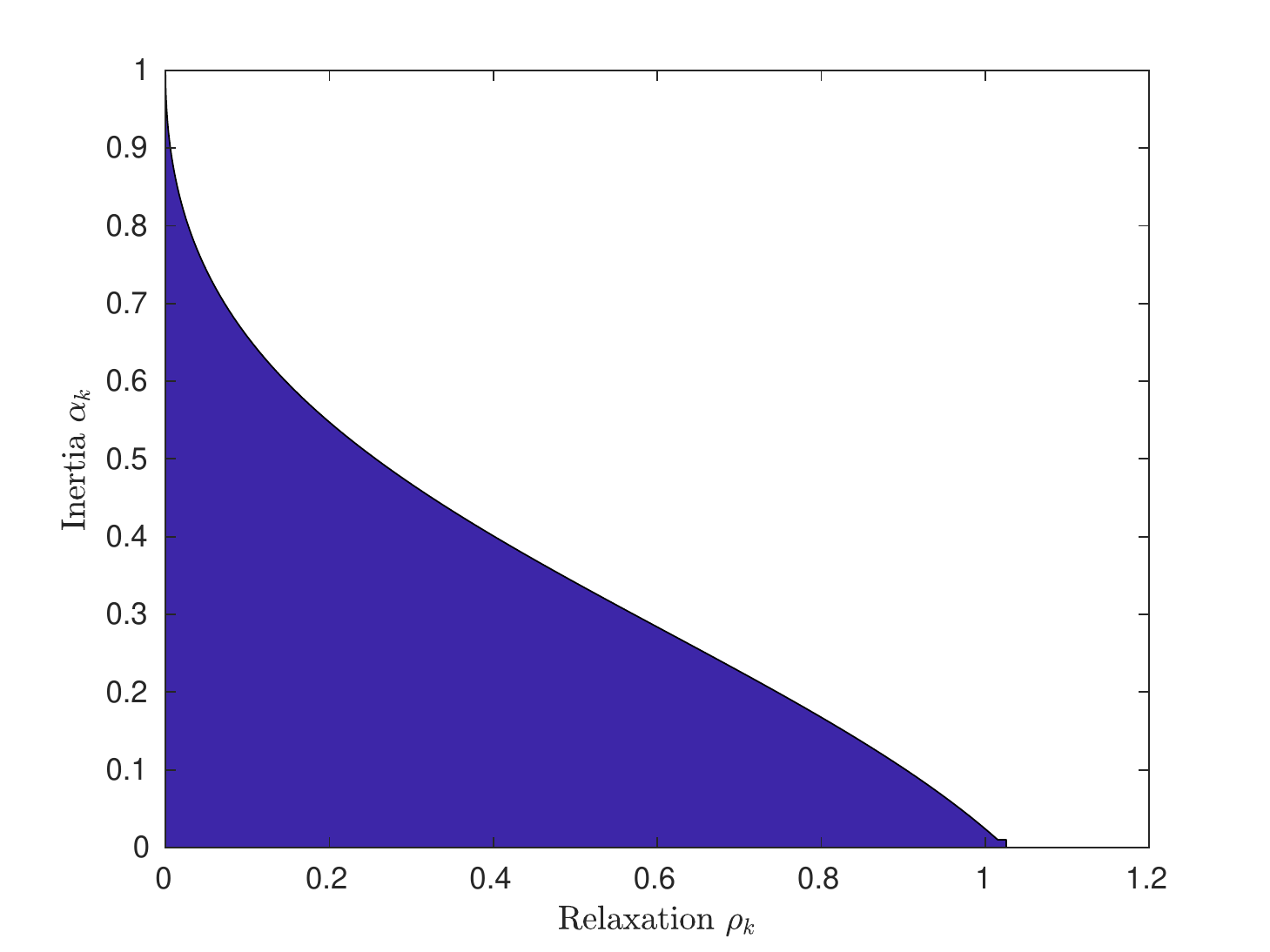}
		\caption{Trade-off between inertia and relaxation for $\mu=0.5$ (left) and $\mu=0.95$ (right).}\label{FracProgBox1}
	\end{figure}

\begin{Proposition} \label{Lem3bis}
Let $(\alpha_k)_{k \geq 1}$ be a non-decreasing sequence of non-negative numbers with $0\le \alpha_k \le \alpha <1$ for all $k\ge 1$ and $(\rho_k)_{k \geq 1}$ a sequence of positive numbers such that $ \lim\inf_{k \to +\infty}\delta_k >0$. For $x^* \in \Zeros(A+B)$ we define the sequence $(H_k)_{k \geq 1}$ as in \eqref{Hk}. Then the following statements are true:
\begin{itemize}
	\item[(i)] The sequence $(x_k)_{k \geq 0}$ is bounded.
	\item[(ii)] There exists $\lim_{k \to +\infty} H_k \in \R$.  
	\item[(iii)] $\sum_{k=1}^{+\infty} \delta_{k} \|x_{k+1}-x_{k}\|^2  < +\infty$.
\end{itemize}
\end{Proposition}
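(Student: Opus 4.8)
The plan is to use the descent inequality \eqref{Descent1} from Proposition \ref{Lem2bis} as a Fej\'er-type monotonicity property for the Lyapunov sequence $(H_k)_{k\geq 1}$. Since $\liminf_{k\to+\infty}\delta_k>0$, I would first fix $k_1\geq k_0$ such that $\delta_k>0$ for all $k\geq k_1$; then \eqref{Descent1} shows that $(H_k)_{k\geq k_1}$ is non-increasing, and in particular $H_k\leq H_{k_1}$ for every $k\geq k_1$.

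For (i), set $\varphi_k:=\|x_k-x^*\|^2$. Observing from \eqref{Hk} that the coefficient of $\|x_k-x_{k-1}\|^2$ is non-negative (because $0\leq\alpha_k\leq\alpha<1$, $\rho_k>0$, $\mu>0$ and $\theta_k\geq 1$, the last by Proposition \ref{Lem1}), we obtain $\varphi_k-\alpha\varphi_{k-1}\leq\varphi_k-\alpha_k\varphi_{k-1}\leq H_k\leq H_{k_1}$ for all $k\geq k_1$, i.e.\ $\varphi_k\leq\alpha\varphi_{k-1}+H_{k_1}$. Iterating this linear recursion gives $\varphi_k\leq\alpha^{k-k_1}\varphi_{k_1}+\frac{\max\{H_{k_1},0\}}{1-\alpha}$ for all $k\geq k_1$, so $(\varphi_k)_{k\geq0}$, and hence $(x_k)_{k\geq0}$, is bounded. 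This step, although short, is the crux of the argument and the main obstacle: the term $-\alpha_k\|x_{k-1}-x^*\|^2$ prevents $H_k$ from being coercive in $\|x_k-x^*\|$ on its own, and it is precisely the hypothesis $\alpha<1$ that restores boundedness through the recursion (this is where the bound $\alpha_k\le\alpha<1$, rather than merely $\alpha_k<1$ for each $k$, is used).

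For (ii), using the boundedness just obtained together with the non-negativity of the last term in \eqref{Hk}, we get $H_k\geq\varphi_k-\alpha_k\varphi_{k-1}\geq-\alpha\sup_{j\geq0}\varphi_j>-\infty$; thus $(H_k)_{k\geq k_1}$ is non-increasing and bounded from below, hence $\lim_{k\to+\infty}H_k\in\R$ exists. Finally, for (iii), I would sum \eqref{Descent1} from $k_1$ to $N$: telescoping yields $\sum_{k=k_1}^{N}\delta_k\|x_{k+1}-x_k\|^2\leq H_{k_1}-H_{N+1}\leq H_{k_1}-\lim_{k\to+\infty}H_k<+\infty$, and letting $N\to+\infty$ and adding the finitely many remaining (possibly negative, but finite) terms $\delta_k\|x_{k+1}-x_k\|^2$ with $1\leq k<k_1$ gives $\sum_{k=1}^{+\infty}\delta_k\|x_{k+1}-x_k\|^2<+\infty$. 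Everything after the boundedness step is routine telescoping.
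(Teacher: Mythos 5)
Your proposal is correct and follows essentially the same route as the paper: use \eqref{Descent1} together with $\liminf_{k\to+\infty}\delta_k>0$ to make $(H_k)$ eventually non-increasing, bound $\|x_k-x^*\|^2$ through the recursion $\varphi_k\le\alpha\varphi_{k-1}+H_{k_0}$ and the geometric series (this is exactly where the paper also uses $\alpha<1$), then deduce convergence of $H_k$ from monotonicity plus a lower bound, and obtain (iii) by telescoping \eqref{Descent1}. The only differences are cosmetic (your explicit $\max\{H_{k_1},0\}$ bound and the lower bound $H_k\ge-\alpha\sup_j\varphi_j$ instead of the paper's remark that boundedness of $(x_k)$ yields boundedness of $(H_k)$).
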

\begin{proof}
(i) From \eqref{Descent1} and $ \lim\inf_{k \to +\infty }\delta_k >0 $ we can conclude that there exists $ k_{0} \geq 1 $ such that the sequence $(H_k)_{k \geq k_0}$ is non-increasing. Therefore for all $k \ge k_0$ we have 
	$$
	H_{k_0} \geq H_k \geq \|x_{k}-x^*\|^2 -\alpha_k \|x_{k-1}-x^* \|^2 \geq \|x_{k}-x^*\|^2 -\alpha \|x_{k-1}-x^* \|^2$$
and, from here, 
	\begin{eqnarray*}
		 \|x_{k}-x^*\|^2 
		 &\leq & \alpha \|x_{k-1}-x^*\|^2 + H_{k_0}\\
		 &\leq & \alpha ^{k-k_0}  \|x_{k_0}-x^*\|^2 + H_{k_0}(1+\alpha +...+\alpha^{k-k_0-1})\\
		 &=& \alpha ^{k-k_0}  \|x_{k_0}-x^*\|^2 + H_{k_0}\frac{1-\alpha^{k-k_0}}{1-\alpha}, 		 
	\end{eqnarray*}
which implies that $(x_k)_{k \geq 0}$ is bounded and so is $(H_k)_{k \geq 1}$. 

(ii) Since  $(H_k)_{k \geq k_0}$ is non-increasing and bounded, it converges to a real number.

(iii) Follows from (ii) and Proposition \ref{Lem2bis}. 
\end{proof}

We are now in the position to prove the main result of this section. In order to do so, we first recall two useful lemmas.

\begin{Lemma} {\rm(\cite{AA01})}\label{AA} 
	Let $(\varphi_k)_{k \geq 0}$,  $(\alpha_k)_{k \geq 1}$,  and $(\psi_k)_{k \geq 1}$ be sequences of nonnegative real numbers satisfying  
	\begin{equation*}
	\varphi_{k+1}\le \varphi_k+ \alpha_k (\varphi_k-\varphi_{k-1}) +\psi_k \quad \forall k \geq 1, \quad  \sum_{k=1}^{+\infty} \psi_k < +\infty,
	\end{equation*}
and such that $0 \leq \alpha_k \leq \alpha < 1$ for all $k \geq 1$. Then the limit $\lim_{k\to +\infty} \varphi_k \in \R$ exists. 	
\end{Lemma}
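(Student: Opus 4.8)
The statement to be proved is the classical Alvarez–Attouch lemma (Lemma~\ref{AA}): if $\varphi_{k+1}\le\varphi_k+\alpha_k(\varphi_k-\varphi_{k-1})+\psi_k$ with $0\le\alpha_k\le\alpha<1$ and $\sum_k\psi_k<+\infty$, then $\lim_{k\to+\infty}\varphi_k$ exists in $\R$. Let me sketch a proof plan.

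The plan is to introduce the "positive-part increment" telescoping argument. First I would define $\theta_k := (\varphi_k-\varphi_{k-1})_+ = \max\{\varphi_k-\varphi_{k-1},0\}$ and $t_k := \varphi_k + \sum_{j\ge k}$... actually more cleanly, set $u_k := \varphi_k - \alpha\varphi_{k-1} + \Psi$ for a suitable tail of $\psi$, but the standard route is: from the recursion and $\alpha_k\le\alpha$, deduce $\varphi_{k+1}-\varphi_k \le \alpha(\varphi_k-\varphi_{k-1})_+ + \psi_k$, hence $\theta_{k+1}\le\alpha\theta_k+\psi_k$. Iterating gives $\theta_{k+1}\le \alpha^k\theta_1 + \sum_{j=1}^{k}\alpha^{k-j}\psi_j$, and summing over $k$ (interchanging the order of summation, using $\sum_j\alpha^{\cdot}\le\frac{1}{1-\alpha}$) yields $\sum_{k\ge1}\theta_k \le \frac{\theta_1}{1-\alpha} + \frac{1}{1-\alpha}\sum_j\psi_j < +\infty$.

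The second step is to upgrade summability of the positive parts to convergence of $(\varphi_k)$. Define $w_k := \varphi_k - \sum_{j=1}^{k-1}\theta_j$. Then $w_{k+1}-w_k = (\varphi_{k+1}-\varphi_k) - \theta_k \le \theta_k - \theta_k \le 0$ wait — one needs $\varphi_{k+1}-\varphi_k\le\theta_{k+1}$? No: by definition $\varphi_{k+1}-\varphi_k\le(\varphi_{k+1}-\varphi_k)_+=\theta_{k+1}$. Hmm, so instead set $w_k := \varphi_k - \sum_{j=2}^{k}\theta_j$ (with $\sum_{j=2}^1=0$); then $w_{k+1}-w_k = (\varphi_{k+1}-\varphi_k)-\theta_{k+1}\le 0$, so $(w_k)$ is non-increasing. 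Since $\sum_j\theta_j<+\infty$, the sequence $(\sum_{j=2}^k\theta_j)_k$ converges, and moreover $(\varphi_k)$ is bounded below (it is nonnegative by hypothesis), so $(w_k)$ is bounded below; a non-increasing sequence bounded below converges. Therefore $\varphi_k = w_k + \sum_{j=2}^k\theta_j$ converges as the sum of two convergent sequences, which is the claim.

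I do not expect a serious obstacle here — the result is standard and the argument is a two-step telescoping plus a monotone-convergence trick. The one place to be careful is the double-sum interchange $\sum_{k=1}^{N}\sum_{j=1}^{k}\alpha^{k-j}\psi_j = \sum_{j=1}^{N}\psi_j\sum_{k=j}^{N}\alpha^{k-j}\le \frac{1}{1-\alpha}\sum_{j=1}^{N}\psi_j$, valid since all terms are nonnegative (Tonelli), and the bookkeeping of indices in the definition of $w_k$ so that $(w_k)$ is genuinely monotone. Nonnegativity of $(\varphi_k)$ is used precisely once, to get the lower bound that turns monotonicity of $(w_k)$ into convergence; this is why the hypothesis "sequences of nonnegative real numbers" appears. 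Since the paper is citing \cite{AA01} for this lemma, presumably the authors will simply refer to that reference rather than reproduce the argument, but the above is the self-contained proof.
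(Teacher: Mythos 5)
Your proof is correct, and it is precisely the classical positive-part/telescoping argument of Alvarez and Attouch: the paper itself gives no proof, invoking \cite{AA01} instead, and your two-step argument (summability of $\theta_k=(\varphi_k-\varphi_{k-1})_+$ via the geometric recursion $\theta_{k+1}\le\alpha\theta_k+\psi_k$, then convergence of the non-increasing, bounded-below sequence $w_k=\varphi_k-\sum_{j=2}^{k}\theta_j$) is the standard proof from that reference. The index bookkeeping and the single use of nonnegativity of $(\varphi_k)$ to bound $w_k$ below are handled correctly, so nothing needs to be fixed.
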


\begin{Lemma}\label{Opial}{\bf (discrete Opial Lemma, \cite{Opial})}
	Let $C$ be a nonempty subset of $H$ and $(x_k)_{k \geq 0}$ be a sequence in $H$ such that the following two conditions hold:
	\begin{itemize}
		\item[(i)] For every $x\in C$, $\lim_{k\to+\infty}\|x_k-x\|$ exists.
		\item[(ii)] Every weak sequential cluster point of $(x_k)_{k \geq 0}$ is in $C$.
	\end{itemize}
	Then $(x_k)_{k \geq 0}$ converges weakly to an element in $C$.
\end{Lemma}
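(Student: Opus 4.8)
The plan is to run the classical three-step argument behind Opial's lemma. First I would check that $(x_k)_{k\geq 0}$ is bounded: since $C\neq\emptyset$, fix some $x\in C$; condition (i) says that $\|x_k-x\|$ converges in $\R$, hence is bounded, and therefore so is $(x_k)_{k\geq 0}$. Because $H$ is a Hilbert space it is reflexive, so the bounded sequence $(x_k)_{k\geq 0}$ possesses at least one weakly convergent subsequence, and by (ii) its weak limit lies in $C$. Thus the set of weak sequential cluster points of $(x_k)_{k\geq 0}$ is a nonempty subset of $C$.

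The core step is to show that this cluster-point set is a singleton. Let $\bar x_1$ and $\bar x_2$ be two weak sequential cluster points; by (ii) both lie in $C$, so by (i) the real sequences $\|x_k-\bar x_1\|$ and $\|x_k-\bar x_2\|$ converge. Expanding the squared norms gives, for every $k$,
\[
\|x_k-\bar x_1\|^2-\|x_k-\bar x_2\|^2 = 2\left\langle x_k, \bar x_2-\bar x_1\right\rangle + \|\bar x_1\|^2-\|\bar x_2\|^2,
\]
so the real sequence $\left\langle x_k, \bar x_2-\bar x_1\right\rangle$ converges; call its limit $\ell$. Passing to a subsequence of $(x_k)_{k\geq 0}$ that converges weakly to $\bar x_1$ yields $\ell=\left\langle \bar x_1, \bar x_2-\bar x_1\right\rangle$, and passing to one converging weakly to $\bar x_2$ yields $\ell=\left\langle \bar x_2, \bar x_2-\bar x_1\right\rangle$. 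Subtracting the two identities gives $\|\bar x_1-\bar x_2\|^2=0$, hence $\bar x_1=\bar x_2$.

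It remains to upgrade ``unique weak cluster point'' to ``weak convergence''. Let $\bar x$ denote the unique weak sequential cluster point of $(x_k)_{k\geq 0}$, which lies in $C$. If $(x_k)_{k\geq 0}$ did not converge weakly to $\bar x$, there would be a weak neighbourhood $V$ of $\bar x$ and a subsequence $(x_{k_j})_{j\geq 0}$ with $x_{k_j}\notin V$ for all $j$; being bounded, this subsequence admits a further subsequence converging weakly to some $\bar y$, which is then a weak sequential cluster point of $(x_k)_{k\geq 0}$ and hence equals $\bar x$ — contradicting $x_{k_j}\notin V$ (eventually that sub-subsequence would enter $V$). Therefore $(x_k)_{k\geq 0}$ converges weakly to $\bar x\in C$. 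The only mildly delicate point is this last topological step; the rest is a direct computation from conditions (i) and (ii), so I expect no serious obstacle.
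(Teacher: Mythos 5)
Your proof is correct. Note that the paper does not prove this lemma at all: it is quoted as a classical result with a citation to Opial's 1967 paper, so there is no in-paper argument to compare against. Your three steps (boundedness from (i), uniqueness of the weak sequential cluster point via the identity $\|x_k-\bar x_1\|^2-\|x_k-\bar x_2\|^2 = 2\langle x_k,\bar x_2-\bar x_1\rangle+\|\bar x_1\|^2-\|\bar x_2\|^2$, and the upgrade from a unique cluster point of a bounded sequence to weak convergence) constitute the standard proof of the cited result, and each step is carried out correctly.
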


\begin{Theorem}\label{TheoremMain1}
Let $(\alpha_k)_{k \geq 1}$ be a non-decreasing sequence of non-negative numbers with $0\le \alpha_k \le \alpha <1$ for all $k\ge 1$ and $(\rho_k)_{k \geq 1}$ a sequence of positive numbers such that $ \lim\inf_{k \to +\infty }\delta_k >0$. Then the sequence $(x_k)_{k \geq 0}$ converges weakly to some element in $\Zeros(A+B)$ as $k \rightarrow +\infty$.
\end{Theorem}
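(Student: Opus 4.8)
The strategy is to verify the two hypotheses of the discrete Opial Lemma (Lemma~\ref{Opial}) with $C := \Zeros(A+B)$, which then yields weak convergence of $(x_k)_{k\geq 0}$ to a point in $\Zeros(A+B)$. The groundwork for condition (i) is almost entirely in place: fix $x^* \in \Zeros(A+B)$ and consider the Lyapunov sequence $(H_k)_{k\geq 1}$ from \eqref{Hk}. By Proposition~\ref{Lem3bis}(ii) the limit $\lim_{k\to+\infty} H_k \in \R$ exists, and by Proposition~\ref{Lem3bis}(iii) together with $\liminf_{k\to+\infty}\delta_k > 0$ we get $\sum_{k=1}^{+\infty}\|x_{k+1}-x_k\|^2 < +\infty$, so in particular $\|x_k - x_{k-1}\| \to 0$. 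Since $0 \le \alpha_k \le \alpha < 1$, the cross term $2\alpha_k\bigl(\alpha_k + \tfrac{1-\alpha_k}{\rho_k(1+\mu\theta_k)}\bigr)\|x_k-x_{k-1}\|^2$ in the definition of $H_k$ tends to $0$ (the coefficient stays bounded because $\rho_k$ is bounded away from $0$, which follows from $\liminf\delta_k>0$). Hence $\|x_k - x^*\|^2 - \alpha_k\|x_{k-1}-x^*\|^2$ converges. To upgrade this to convergence of $\|x_k - x^*\|$ itself, I would invoke Lemma~\ref{AA}: from \eqref{MainIne} and the convexity computation already performed in the proof of Proposition~\ref{Lem2bis} one obtains an inequality of the form $\|x_{k+1}-x^*\|^2 \le \|x_k-x^*\|^2 + \alpha_k\bigl(\|x_k-x^*\|^2 - \|x_{k-1}-x^*\|^2\bigr) + \psi_k$ with $\psi_k \ge 0$ summable (here $\psi_k$ absorbs the $\|x_j - x_{j-1}\|^2$-type remainders, which are summable by Proposition~\ref{Lem3bis}(iii)); applying Lemma~\ref{AA} with $\varphi_k := \|x_k - x^*\|^2$ gives that $\lim_{k\to+\infty}\|x_k-x^*\|$ exists. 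This establishes condition (i).

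For condition (ii), let $\bar x$ be a weak sequential cluster point of $(x_k)_{k\geq 0}$, say $x_{k_j} \rightharpoonup \bar x$. Since $z_k = x_k + \alpha_k(x_k - x_{k-1})$ and $\|x_k - x_{k-1}\| \to 0$ with $\alpha_k$ bounded, we have $\|z_k - x_k\| \to 0$, hence $z_{k_j} \rightharpoonup \bar x$. Next I want $\|y_k - z_k\| \to 0$: summability of $\delta_k\|x_{k+1}-x_k\|^2$ and $\liminf\delta_k>0$ give $\|x_{k+1}-x_k\|\to 0$, hence $\|x_{k+1} - z_k\| \to 0$ (again using $\|x_k-x_{k-1}\|\to 0$), and since $x_{k+1}-z_k = \rho_k(t_k - z_k)$ with $\rho_k$ bounded away from $0$ and above, $\|t_k - z_k\|\to 0$; combining with the estimate $\tfrac{1}{\rho_k}\|x_{k+1}-z_k\| = \|t_k - z_k\| \le (1+\mu\theta_k)\|y_k - z_k\| \ge$ (and the reverse bound via $\|t_k - y_k\| = \lambda_k\|By_k - Bz_k\| \le \mu\theta_k\|y_k-z_k\|$, so $\|y_k-z_k\| \le \|t_k-z_k\| + \|t_k-y_k\| \le \|t_k-z_k\| + \mu\theta_k\|y_k-z_k\|$, i.e. $(1-\mu\theta_k)\|y_k-z_k\| \le \|t_k-z_k\|$) and $1-\mu\theta_k \to 1-\mu > 0$, we deduce $\|y_k - z_k\| \to 0$. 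Therefore $y_{k_j} \rightharpoonup \bar x$ as well. Now, exactly as in the continuous-time proof, from $(I + \lambda_k A)y_k \ni (I-\lambda_k B)z_k$ we get
\begin{equation*}
\frac{1}{\lambda_k}(z_k - y_k) - (Bz_k - By_k) \in (A+B)y_k \quad \forall k \geq 1.
\end{equation*}
The left-hand side along $k_j$ converges strongly to $0$: $\|z_k - y_k\|\to 0$ and $\lambda_k$ is bounded below by Proposition~\ref{Lem1}, while $\|Bz_k - By_k\| \le \tfrac{\mu}{\lambda_{k+1}}\|y_k - z_k\| \to 0$ by \eqref{Lips}. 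Since $A+B$ is maximally monotone its graph is sequentially closed in the weak$\times$strong topology, so passing to the limit yields $0 \in (A+B)\bar x$, i.e. $\bar x \in \Zeros(A+B)$. This verifies condition (ii).

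With both hypotheses of Lemma~\ref{Opial} confirmed, $(x_k)_{k\geq 0}$ converges weakly to an element of $\Zeros(A+B)$, completing the proof. The main obstacle is the bookkeeping in condition (i): because $H_k$ is not simply $\|x_k - x^*\|^2$ but carries the inertial correction terms, one must carefully extract a recursion of the precise form required by Lemma~\ref{AA} with a summable error sequence — this is where the estimates \eqref{MainIne}, \eqref{VV3}, \eqref{F22}, \eqref{F23} and the summability statement in Proposition~\ref{Lem3bis}(iii) all have to be combined cleanly; everything in condition (ii) is then a relatively routine consequence of $\|x_{k+1}-x_k\|\to 0$ and the adaptive stepsize bounds.
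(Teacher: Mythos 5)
Your proof is correct and follows essentially the same route as the paper: the discrete Opial Lemma with $C=\Zeros(A+B)$, condition (i) obtained from the recursion $\|x_{k+1}-x^*\|^2\le(1+\alpha_k)\|x_k-x^*\|^2-\alpha_k\|x_{k-1}-x^*\|^2+\alpha_k(1+\alpha_k)\|x_k-x_{k-1}\|^2$ (coming from \eqref{VV3} and \eqref{F22}) together with Lemma~\ref{AA} and the summability of $\|x_{k+1}-x_k\|^2$ from Proposition~\ref{Lem3bis}(iii), and condition (ii) from $\|x_{k+1}-x_k\|\to0$, hence $\|t_k-z_k\|\to0$ and $\|y_k-z_k\|\to0$, followed by passing to the limit in $\frac{1}{\lambda_k}(z_k-y_k)-(Bz_k-By_k)\in(A+B)y_k$ via the weak--strong sequential closedness of the graph of the maximally monotone operator $A+B$. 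The only quibble is your parenthetical claim that $\liminf_{k\to+\infty}\delta_k>0$ forces $\rho_k$ to be bounded away from zero (it does not, since small $\rho_k$ only increases $\delta_k$); this boundedness of $1/\rho_k$ is, however, used just as tacitly in the paper's own step $\|t_k-z_k\|=\rho_k^{-1}\|x_{k+1}-z_k\|\to0$, so it does not distinguish your argument from the paper's.
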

\begin{proof}
The result will be a consequence of the discrete Opial Lemma. To this end we will prove that the conditions (i) and (ii) in Lemma \ref{Opial} for $C:=\Zeros(A+B)$ are satisfied. 

Let $x^* \in \Zeros(A+B)$. Indeed, it follows from \eqref{VV3} and \eqref{F22} that for $k$ large enough
$$
\|x_{k+1}-x^*\|^2 
\leq  (1+  \alpha_k) \|x_k-x^*\|^2 -  \alpha_k \|x_{k-1}-x^*\|^2  +\alpha_k(1+\alpha_k) \|x_k - x_{k-1}\|^2.
$$
Therefore, according to Lemma \ref{AA} and Proposition \ref{Lem3bis} (iii), $\lim_{k \to +\infty} \|x_k -x^*\|$ exists.

Let $\bar{x}$ be a weak limit point of $(x_k)_{k \geq 0}$ and a subsequence $(x_{k_l})_{l \geq 0}$ which converges weakly to $\bar{x}$ as $l \to +\infty$. 
From Proposition \ref{Lem3bis} (iii)  we have 
	$$
	 \lim_{k \to +\infty } \delta_k \|x_{k+1}-x_{k}\|^2 =0,
	 $$
which, as $ \lim\inf_{k \to +\infty }\delta_k >0$, yields $\lim_{k \to +\infty } \|x_{k+1}-x_{k}\| =0 $.
	 Since 
	 $$
	 \|t_k -z_k \| =\frac{1}{\rho_k} \|x_{k+1} -z_k\|= \frac{1}{\rho_k} \|x_{k+1} -x_k + \alpha_k  (x_k -x_{k-1})\| \quad \forall k \geq 1,
	 $$ 
we have $\lim_{k \to +\infty} \|t_{k}-z_{k}\| = 0.$
On the other hand, for all $k \geq 1$ holds
\begin{eqnarray*} 
	\|t_k-z_k\|	&=&  \|y_k-z_k + \lambda_k (By_k-Bz_k)\| \ge  \|y_k-z_k\| - \lambda_k\|By_k-Bz_k\| \\
	&\geq &  \left( 1-\mu \theta_k\right) \|y_k-z_k\|. 
\end{eqnarray*}
	 Since $\lim_{k \to +\infty} \left( 1-\mu \theta_k\right) = 1-\mu > 0$, we can conclude that
	 $
	 \|y_{k} - z_k\| \to 0 
	 $
	  as $k \to +\infty$. This shows that $(y_{k_l})_{l \geq 0}$  and $(z_{k_l})_{l \geq 0}$ converge weakly to $\bar{x}$ as $l \to +\infty$. The definition of $(y_{k_l})_{l \geq 0}$ gives
$$
	 \frac{1}{\lambda_{k_l}} (z_{k_l} -y_{k_l}) + By_{k_l} - Bz_{k_l} \in (A+B)y_{k_l} \quad \forall l \geq 0. 
	 $$
Using that $\left( \frac{1}{\lambda_{k_l}} (z_{k_l} -y_{k_l}) + By_{k_l} - Bz_{k_l}\right)_{l \geq 0} $ converges strongly to $0$ and the graph of the maximal monotone operator $A+B$ is sequentially closed with respect to the weak-strong topology of the product space $H\times H $, we obtain $0 \in (A+B)\bar x$, thus $\bar x \in \Zeros(A+B)$.
\end{proof}

\begin{Remark}\label{pseudomonotone}
In the particular case of the variational inequality \eqref{VIP}, which corresponds to the case when $A$ is the normal cone of a nonempty closed convex subset $C$ of $H$, by taking into account that $J_{\lambda N_C} = P_C$ is for all $\lambda >0$ the projection operator onto $C$, the relaxed inertial FBF algorithm reads
\begin{equation*}
	(RIFBF-VI) \quad \quad \quad (\forall k \geq 1) \
	\begin{cases}
	z_k=x_k+\alpha_k(x_k-x_{k-1})\\
	y_k=P_C (I-\lambda_k B) z_k\\
	x_{k+1}=(1-\rho_k) z_k +\rho_k \left(y_k-\lambda_k(By_k-Bz_k) \right),	
	\end{cases}
\end{equation*}
where $x_0,x_1 \in H$ are starting points, $(\lambda_{k})_{k \geq 1}$ and $(\rho_{k})_{k \geq 1}$ are sequences of positive numbers, and $(\alpha_{k})_{k \geq 1}$ is a sequence of non-negative numbers.

The algorithm converges weakly to a solution of \eqref{VIP} when $B$ is a monotone and Lipschitz continuous operator in the hypotheses of Theorem \ref{TheoremMain1}.  However, we want to point out that it converges even if $B$ is pseudo-monotone on $H$, Lipschitz-continuous and  sequentially weak-to-weak continuous, respectively, if $H$ is finite dimensional, and $B$ is pseudo-monotone on $C$ and Lipschitz continuous.

We recall that $B$ is said to be pseudo-monotone on $C$ (on $H$) if for all $x,y \in C \ (x,y \in H)$ it holds
$$
\langle Bx,y-x\rangle\geq 0\;\Rightarrow\; \langle By,y-x\rangle\geq 0. 
$$
Denoting $t_k:=y_k -  \lambda_k (By_k-Bz_k)$ and $\theta_k:=\frac{\lambda_k}{\lambda_{k+1}}$ for all $k \geq 1$, then for all $x^* \in \Zeros(N_C + B)$  it holds
\begin{equation*}
	\|t_k-x^*\|^2\le \|z_k-x^*\|^2- \left(1- \mu^2 \theta_k^2 \right) \|y_{k} - z_k\|^2 \quad \forall k \geq 1
	\end{equation*}
which is nothing else than relation \eqref{MainIne}.

Indeed, since $y_k \in C$, we have  $\left\langle Bx^*, y_k-x^*\right\rangle \geq 0,$  and, further, by the pseudo-monotonicity of $B$, it holds $\left\langle By_k, y_k-x^*\right\rangle \geq 0$ for all $k \geq 1$. On the other hand, since $y_k=P_C(I-\lambda_k B) z_k$,  we have $\left\langle x^*-y_k,  y_k - z_k+\lambda_k Bz_k \right\rangle  \geq 0$ for all $k \geq 1$. The two inequalities yield
$$
\left\langle y_k - x^*,  z_k - t_k \right\rangle \geq 0 \quad \forall k \geq 1,$$
which, combined with \eqref{intermed}, lead as in the proof of Proposition \ref{Prop1} to the conclusion.

Now, since \eqref{MainIne} holds, the statements in Proposition \ref{Lem2bis} and Proposition \ref{Lem3bis} remain true and, as seen in the proof of Theorem \ref{TheoremMain1}, they guarantee that the limit $\lim_{k \rightarrow +\infty} \|x_k-x^*\| \in \R$ exists, and that $\lim_{k \rightarrow +\infty} \|y_k-z_k\| = \lim_{k \rightarrow +\infty} \|By_k-Bz_k\| = 0$. Having that, the weak converge of $(x_k)_{k \geq 0}$ to a solution of \eqref{VIP} follows, by arguing as in the proof of \cite[Theorem 3.1]{BCV18}, when $B$ is pseudo-monotone on $H$, Lipschitz-continuous and  sequentially weak-to-weak continuous, and as in the proof of \cite[Theorem 3.2]{BCV18}, when $H$ is finite dimensional, and $B$ is pseudo-monotone on $C$ and Lipschitz continuous.
\end{Remark}

\section{Numerical experiments} 
\label{sec:Numerical}

In this section, we provide two numerical experiments that complete our theoretical results. The first one, a bilinear saddle point problem, is a usual deterministic example where all the assumptions are fulfilled to guarantee convergence. For the second experiment we leave the save harbour of justified assumptions and frankly use (RIFBF) to treat a more complex (stochastic) problem and train a special kind of generative machine learning system that receives a lot of attention recently.

\subsection{Bilinear saddle point problem}
\label{ssec:Bilinear}

We want to solve the saddle-point problem
\begin{equation*}
	\min_{\theta \in \Theta} \max_{\varphi \in \Phi} \, V(\theta, \varphi),
\end{equation*}
in the sense that we want to find
$ \theta^{\ast} \in \Theta $ and $ \varphi^{\ast} \in \Phi $ such that
\begin{equation*}
	V(\theta^{\ast}, \varphi) \leq V(\theta^{\ast}, \varphi^{\ast}) \leq V(\theta, \varphi^{\ast})
\end{equation*}
for all $ \theta \in \Theta $ and all $ \varphi \in \Phi $, where $ \Theta \subseteq \mathbb{R}^{m} $ and $ \Phi \subseteq \mathbb{R}^{n} $  are nonempty, closed and convex sets and $V(\theta, \varphi) = \theta^{T} A \varphi + a^{T} \theta + b^{T} \varphi $ with $ A \in \mathbb{R}^{m \times n} $, $ a \in \mathbb{R}^{m} $ and $ b \in \mathbb{R}^{n} $.
The  monotone inclusion to be solved in this case is of the form
\begin{equation*}
	0 \in N_{\Theta \times \Phi}(\theta, \varphi) + F(\theta, \varphi),
\end{equation*}
where
$ F(\theta, \varphi) =
M
\left(
	\begin{array}{c}
		\theta\\
		\varphi
	\end{array}
\right)
+
\left(
	\begin{array}{c}
		 a\\
		-b
	\end{array}
\right)$,
with 
$ M =
\left( 
	\begin{array}{cc}
		0		& A\\
		-A^{T}	& 0
	\end{array} 
\right),$
is monotone and Lipschitz continuous with Lipschitz constant $ L = {\Vert M \Vert}_{2}$. Notice, that $ F $ is not cocoercive.

In our experiment we chose $ m = n = 500 $, and $ A $, $ a $ and $ b $ to have random entries drawn from a uniform distribution on the interval $ \left[0, 1\right] $.
The constraint sets $ \Theta $ and $ \Phi $ were chosen to be unit balls in the Euclidean norm.
Furthermore we took constant stepsize $ \lambda_{k} = \lambda = \frac{\mu}{L} $ for all $ k \geq 1$, where $ 0 < \mu < 1 $, constant inertial parameter $ \alpha_{k} = \alpha $ for all $ k \geq 1$ and constant relaxation parameter $ \rho_{k} = \rho $ for all $ k \geq 1 $.
We set $x_{k} := (\theta_{k}, \varphi_{k}) $ to fit the framework of our algorithm.
The starting point $ x_{0} $ is initialised randomly (entries drawn from the uniform distribution on $ \left[0, 1\right] $) and we set $ x_{1} = x_{0} $.
In Table~\ref{tab:mu-05} we can see the necessary number of iterations for the algorithm to reach $ \Vert y_{k} - z_{k} \Vert \leq \varepsilon $ for different choices of the parameters $ \alpha $ and $ \rho $ and $ \mu = 0.5 $.
The maximum number of iterations was set to be $ 10^{4} $, hence entries with ``1000'' actually mean
``$ \geq 10000 $''.
Since we did not observe different behaviour for various random trials, we provide the results for only one run in the following.

As mentioned in Remark~\ref{Rem:Rho}, there is a trade-off between inertia and relaxation. The parameters $ \alpha $ and $ \rho $ also need to fulfil the relations
\begin{equation*}
	0 \leq \alpha < 1
	\hspace{3mm} \text{and} \hspace{3mm}
	0 < \rho < \frac{2}{(1 + \mu)} \frac{(1 - \alpha)^{2}}{(2 \alpha^{2} - \alpha + 1)},
\end{equation*}
which is the reason why not every combination of $ \alpha $ and $ \rho $ is valid.

\begin{table}[h!]
	\centering
	\includegraphics[width=0.95\textwidth]{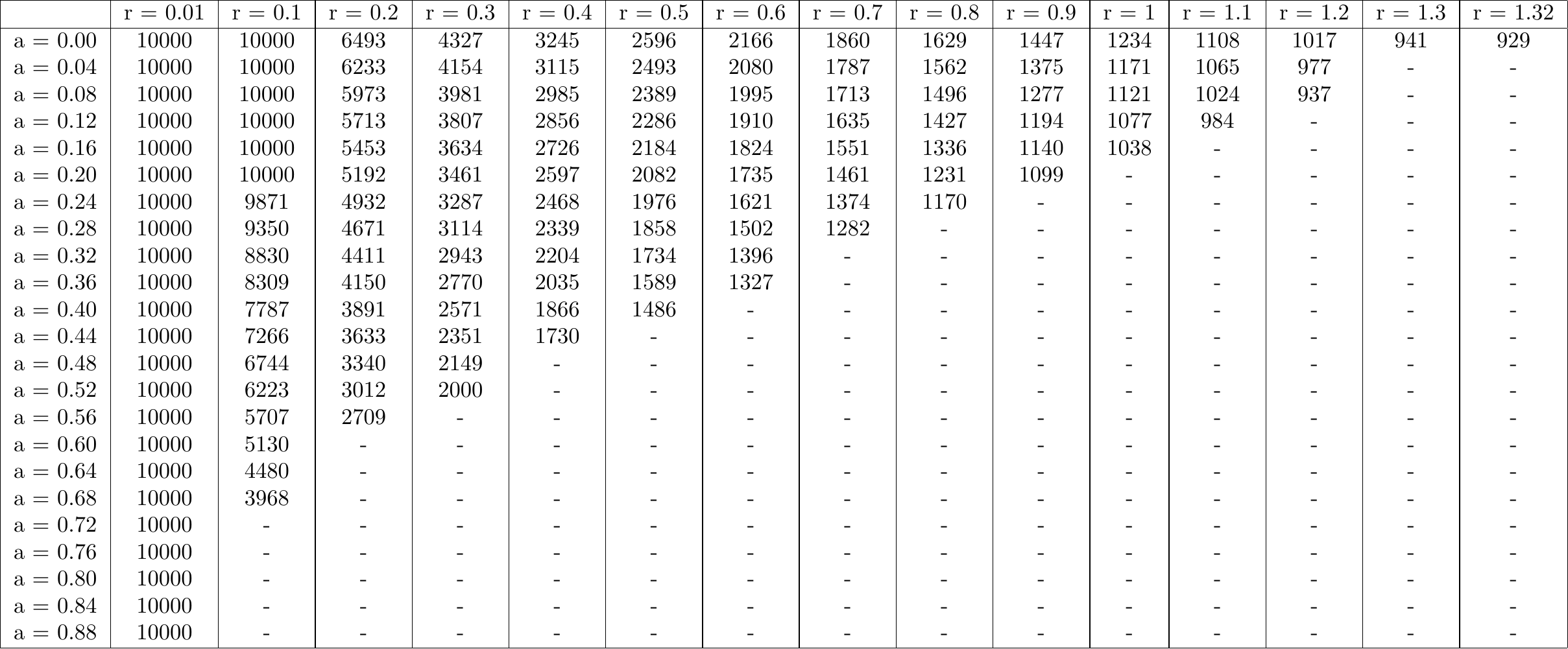}
	\caption{Number of iterations when $\mu=0.5$ and $\varepsilon=10^{-5}$ with constant relaxation parameter~(r) and constant inertial parameter~(a) (maximum number of iterations was $ 10^{4} $).}\label{tab:mu-05}
\end{table}

We see that for a particular choice of the relaxation parameter the least number of iterations is achieved when the inertial parameter is as large as possible.
If we fix the inertial parameter we observe that also larger values of $ \rho $ are better and lead to fewer iterations.
To get a conclusion regarding the trade-off between the two parameters, Table~\ref{tab:mu-05} suggests that the influence of the relaxation parameter is stronger than that of the inertial parameter.
Even though the possible values for $ \alpha $ get smaller if $ \rho $ goes to $ \rho_{\max}(\mu) = \frac{2}{1+\mu} $, the number of iterations gets less for $ \alpha > 0 $. In particular, we want to point out that over-relaxation ($ \rho > 1 $) seems to be highly beneficial.

Comparing the results for different choices of $ \mu $ in Table~\ref{tab:mu-09} ($ \mu = 0.9 $) and Table~\ref{tab:mu-01} ($ \mu = 0.1 $), we can observe very interesting behaviour.
Even though smaller $ \mu $ allows for larger values of the relaxation parameter (see Remark~\ref{Rem:Rho}), larger $ \mu $ leads to better results in general.
This behaviour presumably is due to the role $ \mu $ plays in the definition of the stepsize.

\begin{figure}[h!]
	\centering
	\includegraphics[width=0.7\textwidth]{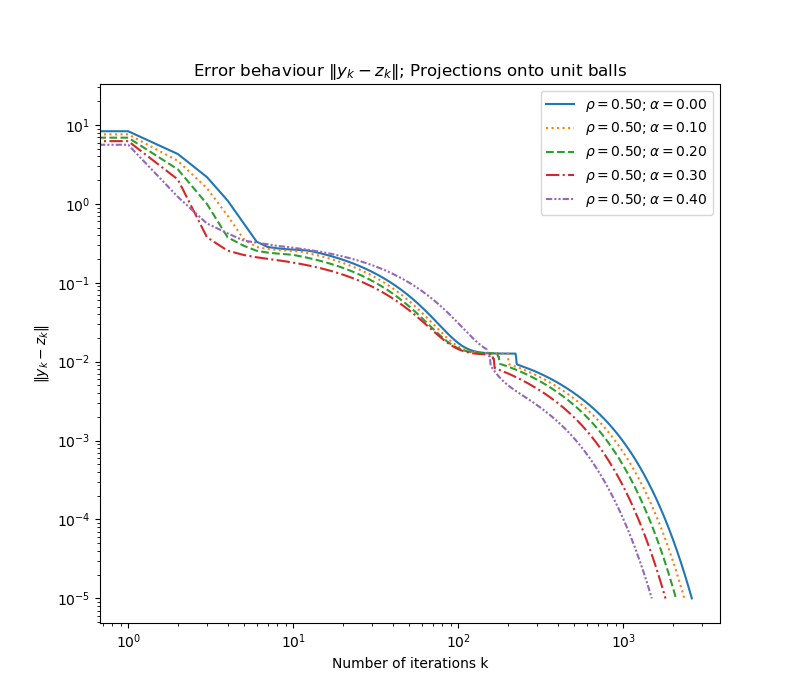}
	\caption{Behaviour of $ \Vert y_{k} - z_{k} \Vert $ for $\mu=0.5$ and $\varepsilon=10^{-5}$ with constant relaxation parameter~($ \rho $) and constant inertial parameter~($ \alpha $)}\label{fig:err_fp}
\end{figure} 

In Figure~\ref{fig:err_fp} we see the development of $ \Vert y_{k} - z_{k} \Vert $ for $ \rho = 0.5 $ and various values for $ \alpha $.
We see that the behaviour of the residual is similar for most combinations of parameters, i.e., for larger values of the inertial parameter the behaviour is better in general throughout the whole run.
However when $ \alpha $ gets close to the limiting case the behaviour of the residual is not consistently better anymore.
Temporarily the residual is even worse than for smaller $ \alpha $, nevertheless the algorithm still terminates in fewer iterations in the end.

\begin{table}[h!]
	\centering
	\includegraphics[width=0.95\textwidth]{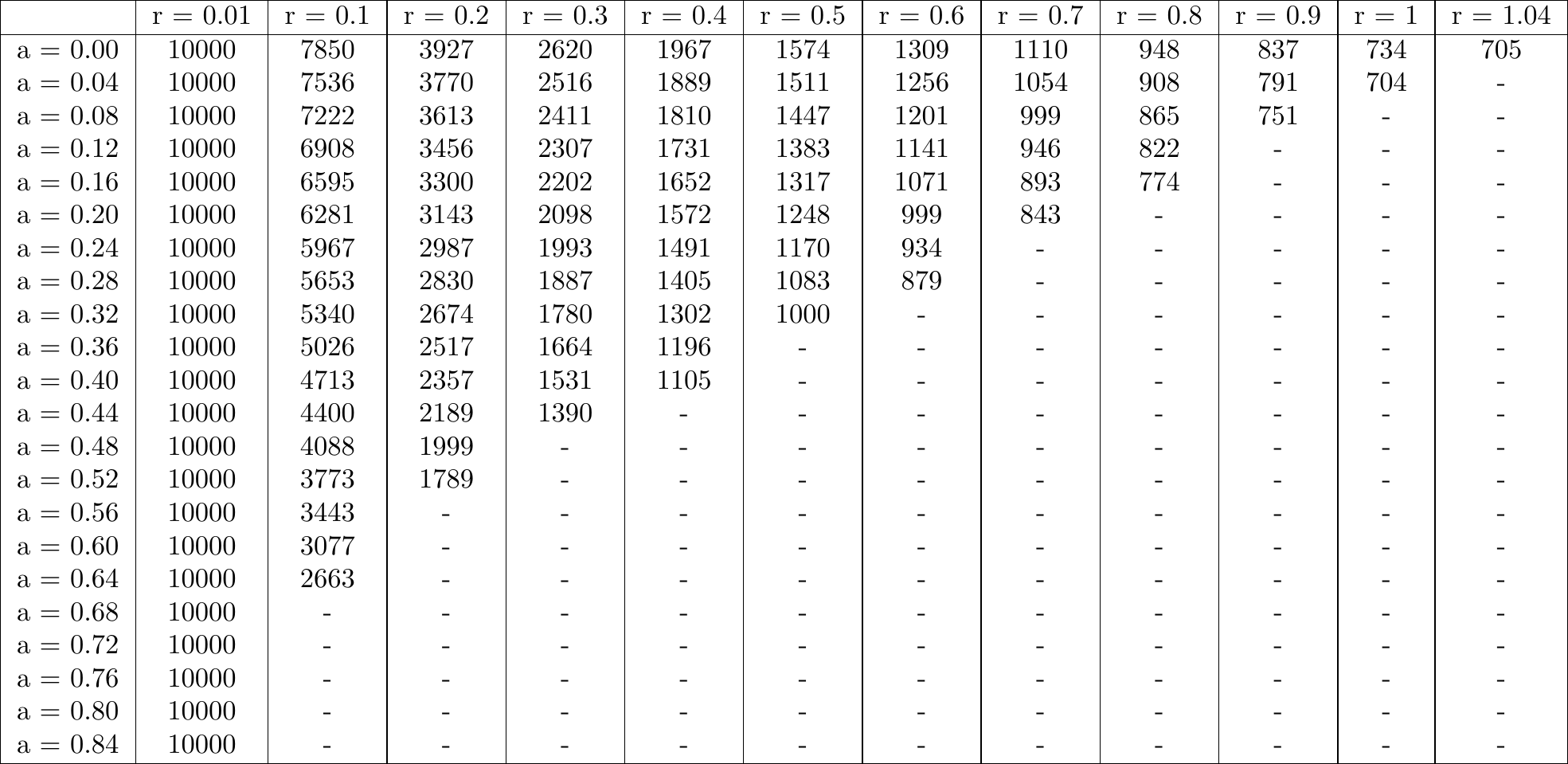}
	\caption{Number of iterations when $\mu=0.9$ and $\varepsilon=10^{-5}$ with constant relaxation parameter~(r) and constant inertial parameter~(a) (maximum number of iterations was $ 10^{4} $).}\label{tab:mu-09}
\end{table}

To get further insight into the convergence behaviour, we also look at the following gap function,
\begin{equation*}
	G(s, t) := \inf_{\theta \in \Theta} V(\theta, t) - \sup_{\varphi \in \Phi} V(s, \varphi).
\end{equation*}
The quantity $ (G(\theta_{k}, \varphi_{k}))_{k \geq 0} $ should be a measure to judge the performance of the iterates $ (\theta_{k}, \varphi_{k})_{k \geq 0} $, as for the optimum $ (\theta^{\ast}, \varphi^{\ast}) $ we have $ V(\theta^{\ast}, \varphi) \leq V(\theta^{\ast}, \varphi^{\ast}) \leq V(\theta, \varphi^{\ast}) $ for all $ \theta \in \Theta $ and all $ \varphi \in \Phi $ and hence $ G(\theta^{\ast}, \varphi^{\ast}) = 0 $.\\
Because of the particular choice of a bilinear objective and the constraint sets $ \Theta $ and $ \Phi $ the expressions can be actually computed in closed form and we get
\begin{equation*}
	G(\theta_{k}, \varphi_{k}) = - \Vert A \varphi_{k} + a \Vert_{2} + b^{T}\varphi_{k} - \Vert A^{T}\theta_{k} + b \Vert_{2} - a^{T}\theta_{k}  \quad \forall k \geq 0.
\end{equation*}

In Figure~\ref{fig:err_gap} we see the development of the absolute value of the gap, $ \lvert G(\theta_{k}, \varphi_{k}) \rvert $ for $ \rho = 0.5 $ and various values for $ \alpha $.
We see that, as in the case for the residual of the fixed point iteration, the behaviour is similar for most combinations of parameters, i.e., for larger values of the inertial parameter the behaviour is better in general throughout the whole run.
However when $ \alpha $ gets close to the limiting case the behaviour of the gap is not consistently better anymore.
As the theory suggests, the gap indeed decreases and tends to zero as the number of iterations grows larger.

\begin{figure}[h!]
	\centering
	\includegraphics[width=0.7\textwidth]{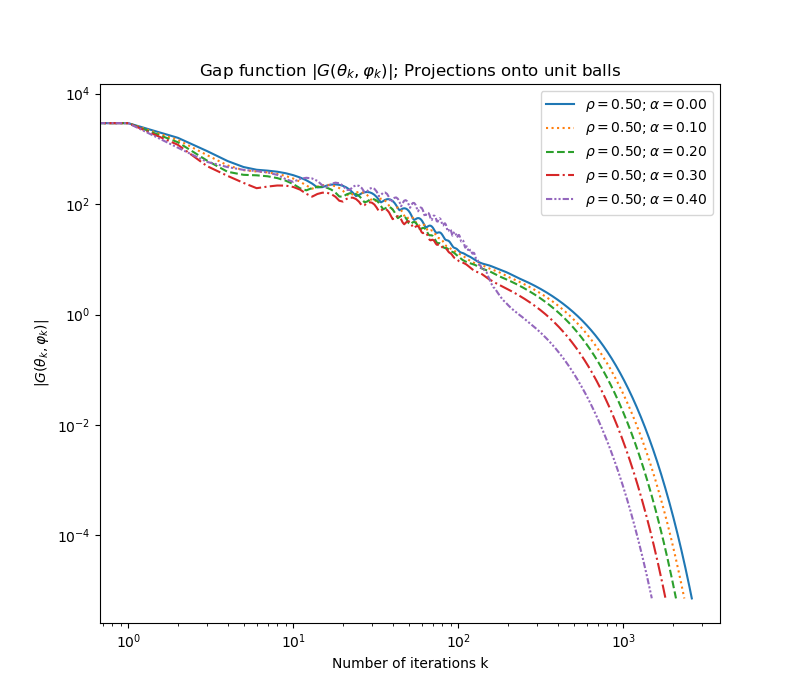}
	\caption{Behaviour of $ \lvert G(\theta_{k}, \varphi_{k}) \rvert $ for $\mu=0.5$ and $\varepsilon=10^{-5}$ with constant relaxation parameter~($ \rho $) and constant inertial parameter~($ \alpha $).}\label{fig:err_gap}
\end{figure}

\begin{table}
	\centering
	\includegraphics[angle=90,origin=c,height=0.95\textwidth]{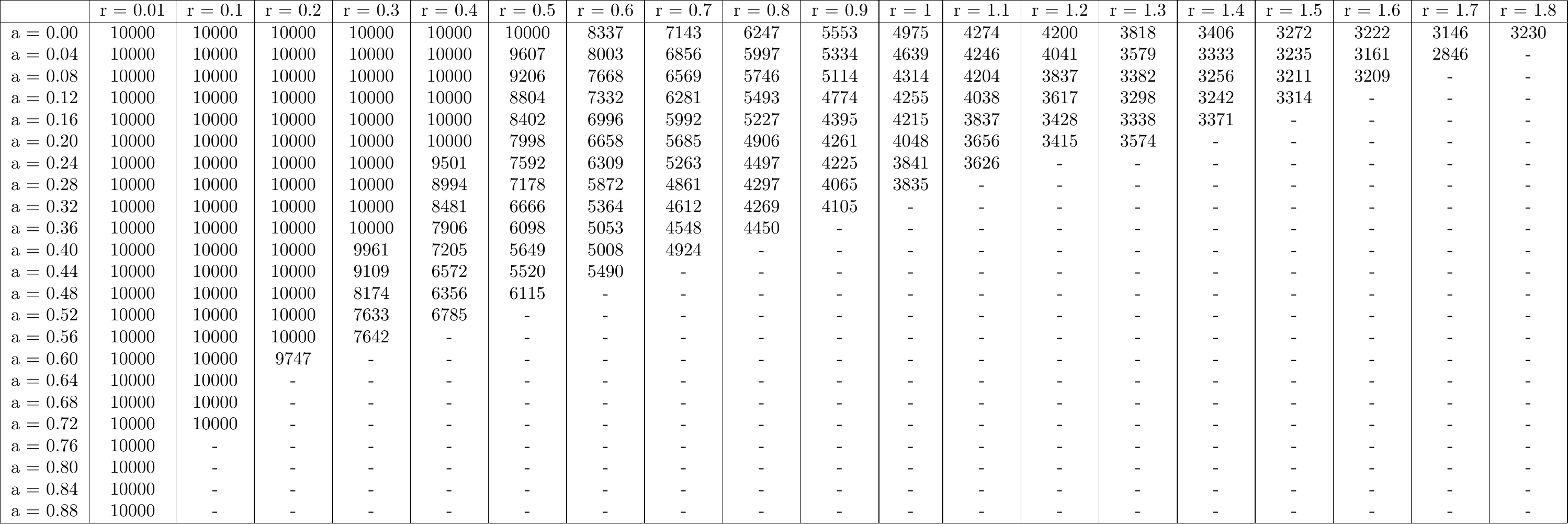}
	\caption{Number of iterations when $\mu=0.1$ and $\varepsilon=10^{-5}$ with constant relaxation parameter~(r) and constant inertial parameter~(a) (maximum number of iterations was $ 10^{4} $).}\label{tab:mu-01}
\end{table}

\subsection{Generative Adversarial Networks (GANs)}
\label{ssec:GAN}

Generative Adversarial (Artificial Neural) Networks (GANs) is a class of machine learning systems, where two ``adversarial'' networks compete in a (zero-sum) game against each other. Given a training set, this technique aims to learn to generate new data with the same statistics as the training set.
The generative network, called generator, tries to mimic the original genuine data (distribution) and strives to produce samples that fool the other network. The opposing, discriminative network, called discriminator, evaluates both true samples as well as generated ones and tries to distinguish between them.
The generator's objective is to increase the error rate of the discriminative network by producing novel samples that the discriminator thinks were not generated, while its opponent's goal is to successfully judge (with high certainty) whether the presented data is true or not.
Typically, the generator learns to map from a latent space to a data distribution which is (hopefully) similar to the original distribution. However, one does not have access to the distribution itself, but only random samples drawn from it.
\begin{figure}
	\centering
	\includegraphics[width=0.8\textwidth]{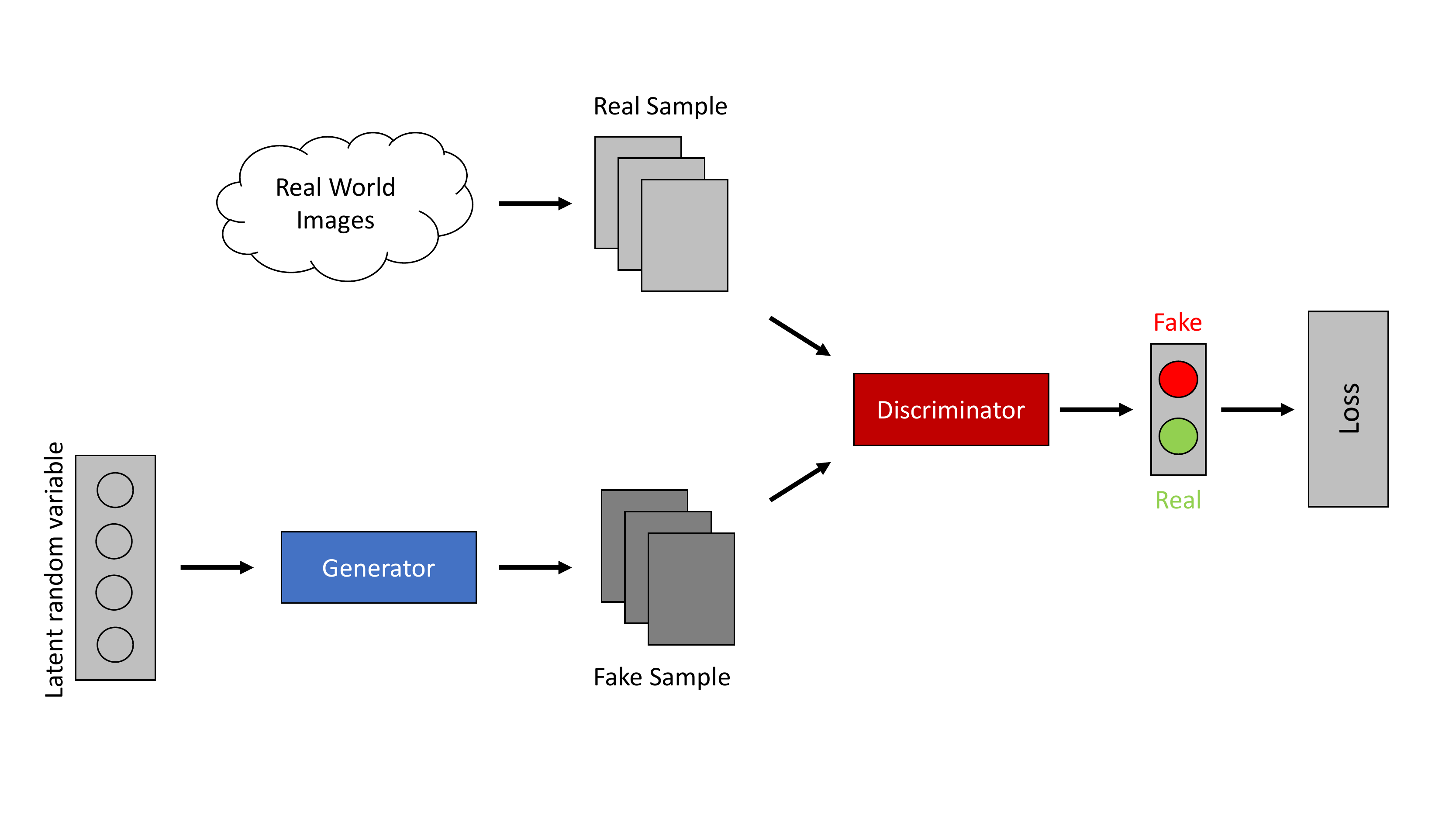}
	\caption{Schematic illustration of Generative Adversarial Networks.}
\end{figure}
The original formulation of GANs by Goodfellow~et~al.~\cite{Goodfellow} is as follows,
\begin{equation*}
	\min_{\theta} \max_{\varphi} V(\theta, \varphi),
\end{equation*}
where $ V(\theta, \varphi) = \mathbb{E}_{x \sim p} \left[ \log \left( D_{\varphi} (x) \right) \right] + \mathbb{E}_{x' \sim q_{\theta}} \left[ \log \left( 1 - D_{\varphi} (x') \right) \right]$ is the value function, $ \theta $ and $ \varphi $ is the parametrisation of the generator and discriminator, respectively, $ D_{\varphi} $ is the probability how certain the discriminator is that the input is real, and $ p $ and $ q_{\theta} $ is the real and learned distribution, respectively.

Again, the problem
\begin{equation*}
	\min_{\theta \in \Theta} \max_{\varphi \in \Phi} \, V(\theta, \varphi)
\end{equation*}
is understood in the sense that we want to find $ \theta^{\ast} \in \Theta $ and $ \varphi^{\ast} \in \Phi $ such that
\begin{equation*}
	V(\theta^{\ast}, \varphi) \leq V(\theta^{\ast}, \varphi^{\ast}) \leq V(\theta, \varphi^{\ast})
\end{equation*}
for all $ \theta \in \Theta $ and all $ \varphi \in \Phi $. The corresponding inclusion problem then reads
\begin{equation*}
	0 \in N_{\Theta \times \Phi}(\theta, \varphi) + F(\theta, \varphi),
\end{equation*}
where $ N_{\Theta \times \Phi} $ is the normal cone operator to $ \Theta \times \Phi $ and $ F(\theta, \varphi) =  \left( \nabla_{\theta}V(\theta, \varphi), -\nabla_{\varphi}V(\theta, \varphi) \right)^{T}$.\\
If $ \Theta $ and $ \Phi $ are nonempty, convex and closed sets, and $ V(\theta, \varphi) $ is convex-concave, Fr\'echet differentiable and has a Lipschitz continuous gradient, then we have a variational inequality, the obtained theory holds and we can apply (RIFBF-VI).

This motivates to use (variants of) FBF methods for the training of GANs, even though in practice the used value functions typically are not convex-concave and further, the gradient might not be Lipschitz continuous if it exists at all. Additionally, in general one needs stochastic versions of the used algorithms and which we do not provide in the case of (RIFBF).

Recently, a first successful attempt of using methods coming from the field of variational inequalities for GAN training was done by Gidel et al.~\cite{VIP-GAN}. In particular they applied the well established extra-gradient algorithm and some derived variations. In this spirit we frankly apply the FBF method and a variant with inertial effect ($ \alpha = 0.05 $), as well as a primal-dual algorithm for saddle point problems, introduced by Hamedani and Aybat~\cite{Aybat} (``PDSP''), which to the best of our knowledge has not been used for GAN training before, and compare the results to the best method (``ExtraAdam'') from~\cite{VIP-GAN}.
\begin{table}
	\begin{center}
		\begin{tabular}{c}
			\hline
			\textbf{Generator}\\
			\hline
			
			\\
			\textit{Input:} $ z \in \mathbb{R}^{128} \sim \mathcal{N}(0, I) $\\
			Linear $128  \to 512 \times 4 \times 4$\\
			Batch Normalization\\
			ReLU\\
			transposed conv. (kernel: $ 4 \times 4 $, $ 512 \to 256 $, stride: 2, pad: 1)\\
			Batch Normalization\\
			ReLU\\
			transposed conv. (kernel: $ 4 \times 4 $, $ 256 \to 128 $, stride: 2, pad: 1)\\
			Batch Normalization\\
			ReLU\\
			transposed conv. (kernel: $ 4 \times 4 $, $ 128 \to 3 $, stride: 2, pad: 1)\\
			\textit{Tanh(.)}\\
			\\
			
			\hline
			\textbf{Discriminator}\\
			\hline
			
			\\
			\textit{Input:} $ x \in \mathbb{R}^{3 \times 32 \times 32} $\\
			conv. (kernel: $ 4 \times 4 $, $ 1 \to 64 $, stride: 2, pad: 1)\\
			LeakyReLU (negative slope: 0.2)\\
			conv. (kernel: $ 4 \times 4 $, $ 64 \to 128 $, stride: 2, pad: 1)\\
			Batch Normalization\\
			LeakyReLU (negative slope: 0.2)\\
			conv. (kernel: $ 4 \times 4 $, $ 128 \to 256 $, stride: 2, pad: 1)\\
			Batch Normalization\\
			LeakyReLU (negative slope: 0.2)\\
			Linear $ 128 \times 4 \times 4 \times 4 \to 1 $\\
			\\
			
			\hline
		\end{tabular}
	\end{center}
	\caption{DCGAN architecture for our experiments on CIFAR10.}
	\label{tab:arch}
\end{table}
For our experiments we use the DCGAN architecture (see~\cite{DCGAN}) with the WGAN objective and weight clipping (see~\cite{WGAN}) to train it on the CIFAR10 dataset (see~\cite{CIFAR10}). Note that in absence of bound constraints on the weights of at least one of the two networks, the backward step in the FBF algorithm is redundant, as we would project on the whole space and we obtain the unconstrained extra-gradient method.\\
Furthermore, in our experiments instead of stochastic gradients we use the ``Adam'' optimizer with the hyperparameters ($ \beta_{1} = 0.5 $, $ \beta_{2} = 0.9 $) that where used in~\cite{VIP-GAN}, as the best results there were achieved with this choice.
Also we would like to mention that we only did a hyperparameter search for the stepsizes of the newly introduced methods, all other parameters we chose to be equal as in the aforementioned work.

\begin{table}
	\begin{center}
		\begin{tabular}{|l|c|c|}
			\hline
			Method & IS & FID \\
			\hline
			\hline
			PDSP Adam & 4.20 $\pm$ 0.04 & 53.97 $\pm$ 0.28\\
			\hline
			Extra Adam & 4.07 $\pm$ 0.05 & 56.67 $\pm$ 0.61\\
			\hline
			FBF Adam & 4.54 $\pm$ 0.04 & 45.85 $\pm$ 0.35\\
			\hline
			\textit{IFBF Adam} $\mathit{(\alpha = 0.05)}$ & $\mathit{4.59 \pm 0.04}$ & $\mathit{45.25 \pm 0.60}$\\
			\hline
		\end{tabular}
	\end{center}
	\caption{Best IS and FID scores (averaged over 5 runs) achieved on CIFAR10.}
	\label{tab:GAN}
\end{table}

The model is evaluated using the inception score (IS) (reworked implementation by~\cite{IS} that fixes some issues of the original one) as well as the Fr{\' e}chet inception distance (FID)~(see \cite{FID}), both computed on 50,000 samples. Experiments were run with 5 random seeds for 500,000 updates of the generator on a NVIDIA GeForce RTX 2080Ti GPU. Table~\ref{tab:GAN} reports the best IS and FID achieved by each considered method. Note that the values of IS for Extra Adam differ from those stated in~\cite{VIP-GAN}, due to the usage of the corrected implementation of the score.

We see that even though we only have proved convergence for the monotone case in the deterministic setting, the variants of (RIFBF) perform well in the training of GANs. IFBF Adam outperforms all other considered methods, both for the IS and the FID. As the theory suggests, making use of some inertial effects (regardless that Adam already incorporates some momentum) seems to provide additional improvement of the numerical method in practice. The results suggest, that employing methods that are designed to capture the nature of a problem, in this case a constrained minimax/saddle-point problem, is highly beneficial. In Figure~\ref{fig:samples} we provide samples of the generator trained with the different methods.

\begin{figure}
    \centering
    \begin{subfigure}[b]{0.475\textwidth}
        \centering
        \includegraphics[width=\textwidth]{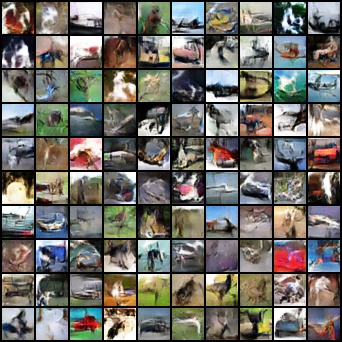}
        \caption{PDSP Adam}
        \label{fig:sample_aybat}
    \end{subfigure}
    \hfill
    \begin{subfigure}[b]{0.475\textwidth}
        \centering
        \includegraphics[width=\textwidth]{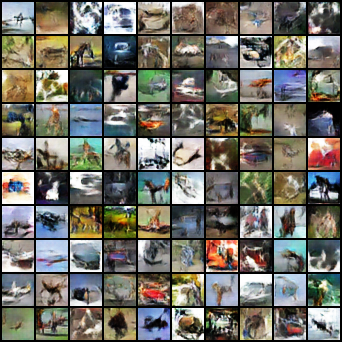}
        \caption{Extra Adam}
        \label{fig:sample_extra}
    \end{subfigure}
    \vskip\baselineskip
    \begin{subfigure}[b]{0.475\textwidth}
        \centering
        \includegraphics[width=\textwidth]{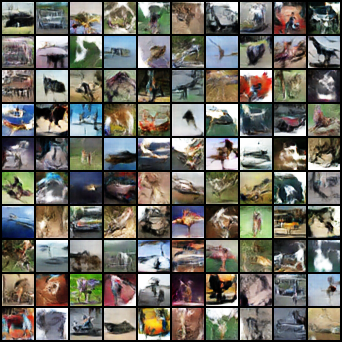}
        \caption{FBF Adam}
        \label{fig:sample_fbf}
    \end{subfigure}
    \quad
    \begin{subfigure}[b]{0.475\textwidth}
        \centering
        \includegraphics[width=\textwidth]{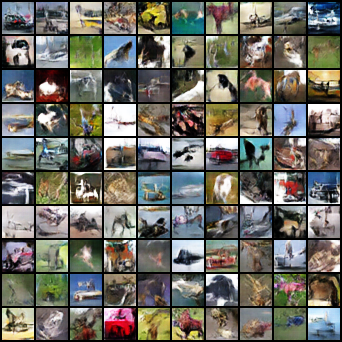}
        \caption{IFBF Adam}
        \label{fig:sample_ifbf}
    \end{subfigure}
    \caption{Comparison of the samples of a WGAN with weight clipping trained with the different methods.}
    \label{fig:samples}
\end{figure}

{\bf Acknowledgements.} The authors would like to thank Julius Berner and Axel B\"ohm for fruitful discussions and their valuable suggestions and comments.

\end{document}